\documentclass[11pt,final]{amsart}
\usepackage{amsmath,amssymb,amsthm,amsfonts,mathrsfs,amsopn}
\usepackage[all]{xy}
\usepackage{dsfont}
\usepackage{hyperref}
\usepackage{color}
\usepackage{esint}
\usepackage{mathtools}
\mathtoolsset{showonlyrefs}
\usepackage{slashed}

\usepackage{tikz-cd}

\usepackage{faktor}

\newcommand{\tnorm}[1]{%
  \left\vert\kern-0.3ex\left\vert\kern-0.3ex\left\vert #1 \right\vert\kern-0.3ex\right\vert\kern-0.3ex\right\vert
}

\usepackage[margin=0.9in]{geometry}

\newcommand{\Abracket}[1]{\left<#1\right>} 
\newcommand{\parenthesis}[1]{\left(#1\right)} 
\newcommand{\braces}[1]{\left\{#1\right\}} 

\newcommand{\R}{\mathbb{R}}
\newcommand{\C}{\mathbb{C}}
\newcommand{\T}{\mathbb{T}}
\newcommand{\bZ}{\mathbb{Z}}

\newcommand{\dd}{\mathop{}\!\mathrm{d}}
\newcommand{\eps}{\varepsilon}
\newcommand{\p}{\partial}
\newcommand{\sph}{\mathbb{S}}

\newcommand{\D}{\slashed{D}}
\newcommand{\pd}{\slashed{\partial}}
\newcommand{\pD}{\mathbf{D}}

\DeclareMathOperator{\Dom}{Dom}
\DeclareMathOperator{\dv}{\dd{vol}}

\DeclareMathOperator{\dist}{dist}
\DeclareMathOperator{\Eigen}{Eigen}

\DeclareMathOperator{\id}{Id}

\DeclareMathOperator{\Mtp}{Multi} 

\DeclareMathOperator{\proj}{Pr}

\DeclareMathOperator{\Span}{Span}

\DeclareMathOperator{\Spect}{Spect}

\DeclareMathOperator{\spt}{spt}

\DeclareMathOperator{\vol}{vol}

\DeclareMathOperator{\up}{up}
\DeclareMathOperator{\low}{down}

\newtheorem{thm}{Theorem}[section]

\newtheorem{lemma}[thm]{Lemma}
\newtheorem{prop}[thm]{Proposition}
\newtheorem{cor}[thm]{Corollary}

\newtheorem{rmk}[thm]{Remark}
\newtheorem{eg}{Example}

\title[Stationary periodic solutions]{Stationary periodic solutions to nonlinear Dirac equations with non-coercive potentials}

\subjclass[2020]{35A15, 35Q41, 35J50, 81Q05}

\thanks{The authors are supported by Beijing Natural Science Foundation No. 1262018.}

\author[R. Wu]{Ruijun Wu}
\address{Ruijun Wu, School of Mathematics and Statistics, Beijing Institute of Technology, Zhongguancun South Street No. 5, 100081 Beijing, P.R. China.}
\email{ruijun.wu@bit.edu.cn}

\author[F. Zhang]{Fuping Zhang}
\address{Fuping Zhang, School of Mathematics and Statistics, Beijing Institute of Technology, Zhongguancun South Street No. 5, 100081 Beijing, P.R. China.}
\email{fuping.zhang@bit.edu.cn}

\begin{document}

\begin{abstract}
    Motivated by the Soler model, we study a nonlinear Dirac equation with a non-coercive Soler-type nonlinearity. Stationary periodic solutions are obtained via variational methods. This amounts to studying the equation on a three-dimensional torus, where the spectrum of the physical Dirac operator on the torus needs to be clarified. To overcome the failure of coercivity of the potential we use a coercive perturbation. 
    By carefully analyzing the critical levels and the linking structures, uniform estimates for the perturbed critical points are obtained when the frequency is sufficiently close to the mass.
    This allows us to pass to the zero-perturbation limit and obtain a nontrivial periodic solution. 
    The argument also extends to suitable scalar external fields.
\end{abstract}

\keywords{nonlinear Dirac equation, periodic solution, stationary solution, linking, perturbation}

\maketitle

\section{Introduction}

In physical models for fermionic fields, the equations of motion often appear as Dirac equations with various nonlinearities.
In this work we consider periodic solutions on~$\R^{1+3}$, which are actually spatially periodic solutions and stationary in the physical sense, i.e., they propagate via a complex phase factor without changing shape. 
Here we consider nonlinearities which are of the Soler type; namely, the nonlinearity is a function of a Lorentz scalar quantity such as~$\bar{\psi}\psi$, instead of an Ambrosetti-Rabinowitz-type where the nonlinearity is a function of the Euclidean amplitude~$|\psi|$. 
Since such solutions fail to decay at infinity, we have to employ a perturbative variational scheme to obtain a nontrivial solution.

More precisely, we will be concerned with the following nonlinear Dirac equation
\begin{align}\label{eq:4D nonlinear Dirac}
    \sum_{\mu=0}^{3} i\gamma^\mu\p_\mu\Psi-m \Psi + \gamma^0 \p_\psi F(\Psi)=0
\end{align}
where~$\Psi\colon \R^{1+3}\to \C^4$ stands for a spinor field,~$m>0$ corresponds to the mass of the particles, and~$F\colon \R^{1+3}\times\C^4\to \R$ is the self-coupling of particles in the system, which is chosen according to the complexity of the physical models, see e.g.~\cite{BartschDing2006Solutions,Ding2007variational, Esteban2002overview,Psarelli2005Maxwell,Soler1970Classical, Thaller1992Dirac} and the references therein.
The~$\gamma^\mu$'s are given as follows.
The matrix~$\gamma^0$ takes the form
\begin{align}
    \gamma^0= \begin{pmatrix} I_2 & 0 \\ 0 & -I_2\end{pmatrix}
    =\begin{pmatrix}
        1 & & &  \\ & 1 & & \\ & & -1 & \\ & &  & -1
    \end{pmatrix}
\end{align}
and induces a decomposition of the spinors into~$\pm1$ eigenspaces of~$\gamma^0$.
As for the others, we denote by
\begin{align}
    {\sigma^1} = \left( {\begin{array}{*{20}{c}}
			{0}&1 \\
			1&{0}
	\end{array}} \right),\;{\sigma^2} = \left( {\begin{array}{*{20}{c}}
			{0}&{ - i} \\
			i&{0}
	\end{array}} \right),\;{\sigma^3} = \left( {\begin{array}{*{20}{c}}
			1&{0} \\
			{0}&{ - 1}
	\end{array}} \right)
\end{align}
for the Pauli matrices, and the gamma matrices are
\begin{align}
    {\gamma^k} = \left( {\begin{array}{*{20}{c}}
			{0}&{{\sigma^k}} \\
			{{-\sigma^k}}&{0}
	\end{array}} \right),\quad\text{for}\quad k= 1, 2, 3.
\end{align}
Note that they satisfy the following Clifford relations: for~$j,k\in\braces{1,2,3}$,
\begin{align}
    \sigma^j\sigma^k+\sigma^k\sigma^j= 2\delta^{jk}I_2, & &
    \gamma^j\gamma^k + \gamma^k\gamma^j= -2\delta^{jk}I_4.
\end{align}
Moreover,
\begin{align}
    \gamma^0\gamma^k + \gamma^k \gamma^0 =0, & &
    (\gamma^0)^2=I_4.
\end{align}
Then~$\sum_{\mu} i\gamma^\mu \p_\mu$ is the four-dimensional space-time Dirac operator.
With the time direction involved, this operator is not elliptic.

A typical ansatz for the solutions is the so-called \emph{stationary solutions} and takes the form
\begin{align}
    \Psi(t,\vec{x})= e^{-i a t}\psi(\vec{x}),
\end{align}
for some~$a\in\R$,~$a\neq 0$.
Such solutions are regarded as “particle-like solutions” and hence attract much attention. 
For spinors of such form and for~$F$ invariant under phase rotations of spinor and translation of time ( i.e., ~$F((t,\vec{x}),e^{i\theta}\Psi)=F(\vec{x},\Psi)$),~\eqref{eq:4D nonlinear Dirac} becomes
\begin{align}
    i\gamma^k\p_k\psi + a\gamma^0\psi-m\psi +\gamma^0\p_\psi F(\vec{x}, \psi)=0.
\end{align}
Equivalently, we are led to consider the equation
\begin{align}\label{eq:NDE-spatial}\tag{*}
    -i\gamma^0 \gamma^k\p_k \psi + m\gamma^0\psi -a\psi-\p_\psi F(\vec{x},\psi)=0.
\end{align}
Equations of this form have been studied extensively in the literature, see e.g.~\cite{BalabaneCazenave1988Existence, Balabane1988Existence, Balabane1990Existence, Cazenave1986Existence,EstebanSere1995stationary,Merle1988Existence} and the references quoted therein.
Different from~\cite{EstebanSere1995stationary}, here we are concerned with periodic solutions, namely solutions satisfying
\begin{align}
    \psi(\vec{x}+\tau)=\psi(\vec{x}), \quad \forall \tau\in \Gamma
\end{align}
where~$\Gamma\subset \R^3$ is a 3-dimensional lattice.
They are referred to as the standing periodic waves for~\eqref{eq:4D nonlinear Dirac}. 

\ 

In quantum field theory the Hamiltonian is characterized by symmetry of the system, and the periodicity is thus frequently imposed. 
In one-dimensional dynamics this is usually the case. 
According to the features of the particle system, it is often required that the nonlinearity has spatial periodicity. 
Here we are in a setting similar to that of~\cite{DingLiu2014periodic}, namely we seek periodic solutions for the nonlinear Dirac equation~\eqref{eq:NDE-spatial}. 
The difference from~\cite{DingLiu2014periodic} lies in the nonlinearity, which will become clear later. 

Geometrically, this amounts to considering the equation~\eqref{eq:NDE-spatial} on the compact manifold~$\mathbb{T}^3=\faktor{\R^3}{\Gamma}$.
The spectral properties for the linearized operators on torus are in great contrast to those on~$\R^3$, so is the analysis needed for the variational scheme. 
For example, we cannot appeal to the Fourier space techniques to obtain enough linking structures, and we do not have a global Pohozaev identity to control the kinetic energy and the potential energy separately, which are crucial for the proof in~\cite{EstebanSere1995stationary} to obtain solutions on~$\R^3$. 
On the other hand, the torus is still good enough to allow for variational analysis and we can change local strategies to carry out the variational method in~\cite{EstebanSere1995stationary} to obtain nontrivial solutions. 

\ 

The main motivation of the nonlinearities comes from the following examples. 
In the Soler model~\cite{Soler1970Classical}, the self-coupling has the form 
\begin{align}\label{eq:F Soler}
    F(\psi)
    =\frac{1}{2}G(\bar{\psi}\psi)
\end{align}
with~$G\in C^2(\R,\R)$ and~$G(0)=0$, and~$\bar{\psi}\psi\equiv \Abracket{\gamma^0\psi,\psi}$.
To solve the corresponding nonlinear Dirac equation on~$\R^3$, a suitable radial ansatz was used to reduce the equation to ODEs and then the shooting method can provide some solutions. 
These methods cannot be applied in the periodic setting because of the lack of a good radial ansatz. 
A more general form of~$F$ arises in~\cite{Finkelstein1951Non-linea} in which the authors studied the symmetric coupling between nucleons, muons, and leptons, with 
\begin{align}\label{eq:nonlinearity Finkelstein}
    F(\psi) = \frac{1}{2}|\bar{\psi}\psi|^2 + b|\bar{\psi}\gamma^5\psi|^2
\end{align}
where~$\gamma^5= i\gamma^0\gamma^1\gamma^2\gamma^3$.
In such models, \emph{the existence theory still remains largely open}. 
Indeed, the nonlinearity in~\eqref{eq:nonlinearity Finkelstein} has quartic growth which is supercritical for the relevant Sobolev embedding and is therefore not suitable for the variational scheme used here.
Since standard fermionic models naturally involve Soler-type scalar couplings rather than nonlinearities depending only on~$|\psi|$, it is natural to study nonlinear Dirac equations of this type.
This is the main motivation for our present work.

\

In this work we only deal with the subcritical case. 
To be precise, we impose the following hypotheses on the nonlinearity~$F$.
Let~$F\colon \mathbb{T}^3\times \C^4\to \R_+$ be a function defined on the trivial spinor bundle of~$\mathbb{T}^3$. 
Suppose that there exist constants~$A_j>0$,~$1\leq j\leq 5$, and~$\nu>1$,~$2<\alpha_1\leq\alpha_2<3$,~$\alpha\in (0,1)$,~$\beta>3$ such that
\begin{itemize}
    \item[(F1)] $ 0 \leq F(x,\psi) \leq A_1 \big( |\psi|^{\alpha_1} + |\psi|^{\alpha_2} \big)$, $\forall x\in\mathbb{T}^3$ and $\forall \psi \in \mathbb{C}^4 $;

    \item[(F2)] \( F \in C^{1,\alpha}_{loc} \), $F(x,0) =0 $, $\p_\psi F(x,0)=0$, and
          \( |\p_{\psi} F(x,\psi)| \leq A_2 |\psi|^{\alpha_2 - 1} \) for \( |\psi| \) large;

    \item[(F3)] \(  \p_\psi F(x,\psi)[\psi]\geq \alpha_1 F(x,\psi) \);

    \item[(F4)] \( F(x,\psi) \geq A_3 |\bar{\psi}\psi|^\nu - A_4 \);

    \item[(F5)] for any~$\delta > 0$, there is a~$C_\delta > 0$, such that $\forall \psi \in \mathbb{C}^4$,~$\forall x\in\mathbb{T}^3$,
        \begin{align}
        |\p_\psi F(x,\psi)| \leq A_5 \left( \delta + C_\delta F(x,\psi)^{\frac{1}{\beta}} \right) |\psi|.
        \end{align}
\end{itemize}
Note that by (F1) and (F4) we necessarily have~$\nu\leq \frac{\alpha_2}{2}<\frac{3}{2}$.

\begin{eg}
    We discuss some examples before proceeding further. 
    
    The original Soler nonlinearity~\cite{Soler1970Classical} takes the form
    \begin{align}
        F_{Soler}(\psi)=(\bar{\psi}\psi)^2
    \end{align}
    which has quartic growth. 
    However, since we will work with spinors in the class~$H^{\frac{1}{2}}$ (see Section~\ref{sect:prelim}), which are only~$L^3$ but not necessarily~$L^4$, we cannot handle the Soler nonlinearity above. 

    To match the above hypotheses, we consider 
    \begin{align}
        F_{1}(x,\psi)=h(x) |\bar{\psi}\psi|^{\frac{5}{4}}
    \end{align}
    where~$h(x)\in C^\infty(\mathbb{T}^3,\R)$ is a bounded strictly positive smooth function.
    It satisfies the above hypotheses with~$\nu=\frac{5}{4}$,~$\alpha_1=\alpha_2=\frac{5}{2}$, and~$\beta\in (3,5]$. 
    Note that~$F(x,\psi)$ is only~$C^1$ but not~$C^2$, although~$\bar{\psi}\psi$ appears to be a quadratic term. 
    Indeed, let~$\psi_0 \neq 0$ be such that~$\bar{\psi}_0\psi_0=0$, for instance~$\psi_0=(1,0,1,0)^T$, and let~$\phi_0=\gamma^0\psi_0=(1,0,-1,0)^T$.
    Then along the line~$\psi(t)=\psi_0+ t\phi_0$, we have~$\bar{\psi}(t)\psi(t)= 4t$, hence
    \begin{align}
        F_1(\psi(t))=|\bar{\psi}(t)\psi(t)|^{\frac{5}{4}}=|4t|^{\frac{5}{4}}
    \end{align}
    which is not~$C^2$ at~$t=0$. 
    Thus~$|\bar{\psi}\psi|^{\frac{5}{4}}$ is not~$C^2$ along the Lorentz null cone
    \begin{align}
        \braces{\psi\mid \bar{\psi}\psi=0, \; \psi\neq 0}. 
    \end{align}
    Nevertheless it is~$C^{1,\alpha}$ with~$\alpha=\frac{1}{4}$. 
    This explains why we cannot impose~$C^2$ regularity as long as we aim to deal with pure Soler type nonlinearities. 

    More generally, we can consider nonlinearities of the form 
    \begin{align}
    F_2(x,\psi)= \sum_{j=1}^k h_j(x)|\bar{\psi}\psi|^{n_j} ,
    \end{align}
    where~$h_j$'s are smooth and satisfy~$0<\frac{1}{C}\leq h_j(x)\leq C<+\infty$, and~$1<n_1 < n_2 <\cdots < n_k<\frac{3}{2}$. 
    In this case we can take
    \begin{align}
        0<\alpha<n_1-1, & & 3<\beta<\frac{n_k}{n_k-1}. 
    \end{align}

    We should point out that the above hypotheses are automatically satisfied by nonlinear functions in the Euclidean amplitude~$|\psi|$, for instance
    \begin{align}
        F_3(x,\psi)= \sum_{j=1}^k h_j(x)|\psi|^{2n_j} ,
    \end{align}
    with~$h_j$'s and~$n_j$'s as above. 
    Nonlinearities of this type have been studied in~\cite{DingLiu2014periodic}. 
    However, their methods cannot be used to deal with Soler-type nonlinearities, due to the non-coercivity of the Lorentz scalar~$\bar{\psi}\psi$. 
    We will comment on this later. 

    Indeed, our hypotheses allow for nonlinearities of more complicated form. 
    For instance, 
    \begin{align}
        F_4(x,\psi)
        = h_1(x) |\bar{\psi}\psi|^{\frac{5}{4}} + h_2(x) |\psi|^{\frac{1}{2}}\Abracket{\gamma^6\psi,\psi}
    \end{align}
    where~$h_1,h_2$ are again positive smooth functions and~$\gamma^6$ is a nonnegative definite but degenerate matrix, say 
    \begin{align}
        \gamma^6=\begin{pmatrix}
            1 & -1 & 0 & 0 \\ -1 & 1 & 0 & 0 \\ 0 & 0 & 1 & 0 \\  0& 0& 0& 1 
        \end{pmatrix}
    \end{align}
\end{eg}


The main difficulty in the present problem comes from the non-coercivity of the Lorentz scalar~$\bar{\psi}\psi$, which prevents us from applying directly the variational arguments developed for nonlinearities depending on the Euclidean amplitude~$|\psi|$. 
Although our perturbative strategy is inspired by~\cite{EstebanSere1995stationary}, the periodic setting requires a different compactness argument. 
The key point is to establish a uniform~$L^3$ estimate for the perturbed critical points. 
If such a bound failed, a blow-up normalization together with (F4) would force the limiting spinor into the Lorentz null cone. 
On the other hand, the spectral structure of the Dirac operator near the eigenvalue~$m$, combined with the projection estimate and assumption~(F5), rules out such a nontrivial limit when~$a$ is sufficiently close to~$m$. 
This provides the compactness needed to let the perturbation parameter tend to zero and allows us to treat a class of non-coercive Soler-type nonlinearities which is not covered by the existing periodic variational theory.

We can now state our main result.
\begin{thm}\label{thm:main}
    Assume that (F1-5) hold true. 
    There exists~$m_*\in (0,m)$ such that for each~$a\in (m_*,m)$, there exists a nonzero ~$C^1$ periodic solution of~\eqref{eq:NDE-spatial}.
\end{thm}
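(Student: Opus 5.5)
The plan is a perturbative variational scheme in the spirit of~\cite{EstebanSere1995stationary}, adapted to~$\mathbb{T}^3$. Write~$H_0=-i\gamma^0\gamma^k\p_k+m\gamma^0$ on~$E=H^{\frac12}(\mathbb{T}^3,\C^4)$. Its spectrum is~$\pm\sqrt{m^2+|\xi|^2}$ for~$\xi$ in the dual lattice~$2\pi\Gamma^*$, so it lies in~$(-\infty,-m]\cup[m,\infty)$. The eigenvalues~$\pm m$ come from the constant spinors (the~$\pm1$ eigenspaces of~$\gamma^0$), and there is a spectral gap up to~$\sqrt{m^2+\lambda_1}$, where~$\lambda_1$ is the first nonzero eigenvalue of the Laplacian. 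Split~$E=E^+\oplus E^-$ according to the sign of~$H_0$, and set
\begin{align}
    \Phi_\eps(\psi)=\tfrac12\Abracket{(H_0-a)\psi,\psi}-\int_{\mathbb{T}^3}F(x,\psi)-\frac{\eps}{p}\int_{\mathbb{T}^3}|\psi|^{p},\qquad p\in(2,3),
\end{align}
so that the added term is coercive in~$|\psi|$. For~$a\in(0,m)$ the quadratic part is positive definite on~$E^+$ and negative definite on~$E^-$.

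\textbf{Step 1: critical points of~$\Phi_\eps$ for fixed~$\eps>0$.} I will check the linking geometry. The functional satisfies~$\Phi_\eps\ge\rho>0$ on a small sphere in~$E^+$, by (F1) and~$\alpha_1>2$. On~$E^-\oplus\R e$, with~$e$ a constant spinor in the~$+m$ eigenspace, it is nonpositive outside a large ball; here the~$\eps|\psi|^p$ term gives superquadratic decay in every direction. I will also verify (PS) sequences: (F3) together with the~$\eps$ term gives bounded (PS) sequences in~$E$, and compactness of~$E\hookrightarrow L^q$ for~$q<3$ on the torus finishes. A Benci--Rabinowitz/Kryszewski--Szulkin linking theorem then yields a critical point~$\psi_\eps$ with level~$c_\eps\in[\rho,\bar c]$. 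The key point is that~$\bar c$ is uniform in~$\eps\in(0,1]$: by (F4) I will bound the sup over~$E^-\oplus\R^+ e$ through~$\int|\bar\psi\psi|^\nu$, using only the~$F$ term. Since~$e$ has~$\bar e e=|e|^2>0$, mixing with~$E^-$ should still control the maximum, and I expect~$\bar c\lesssim (m-a)^{\frac{\nu}{\nu-1}}$ as~$a\to m$.

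\textbf{Step 2: uniform bounds as~$\eps\to0$ (the main obstacle).} From~$\Phi_\eps(\psi_\eps)-\frac12\Phi'_\eps(\psi_\eps)\psi_\eps=c_\eps$ and (F3) I obtain uniform bounds on~$\int F(\psi_\eps)$ and~$\eps\int|\psi_\eps|^p$, hence on~$\int|\bar\psi_\eps\psi_\eps|^\nu$ by (F4), but not on~$\|\psi_\eps\|_E$. Suppose~$t_\eps=\|\psi_\eps\|_{L^3}\to\infty$ and normalize~$w_\eps=\psi_\eps/t_\eps$. Then~$\int|\bar w_\eps w_\eps|^\nu\to0$, so any limit lies in the null cone. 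Testing the equation with~$\psi_\eps^+-\psi_\eps^-$ and using (F5) gives
\begin{align}
    \|\psi^\pm_\eps\|^2_{a}\le \big(\delta+C_\delta\|F(\psi_\eps)\|_{L^1}^{1/\beta}\cdots\big)\|\psi_\eps\|^2 ,
\end{align}
where H\"older is applied with~$\beta>3$ so that the~$F^{1/\beta}$ factor pairs against~$L^3$ norms. Together with the gap~$m-a$, this gives smallness relative to~$\|\psi_\eps\|^2$ once~$m-a$ is small. I will split~$w_\eps$ into its component in the~$\pm m$ eigenspaces (constants) and the high-frequency part. The projection estimate should show that the high-frequency part and the~$E^-$ part are small, so~$w_\eps$ approaches a constant spinor in the~$+1$ eigenspace of~$\gamma^0$. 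Such a spinor has~$\bar w w=|w|^2\neq0$, which contradicts the null-cone conclusion. Getting this quantitatively, with the right powers and with the~$\eps|\psi|^{p-2}\psi$ term treated like~$\p F$ but with a favourable sign, is where~$m_*$ is fixed and where I expect the real work. The same argument also rules out~$\psi_\eps\to0$: the lower bound~$c_\eps\ge\rho$ passes to the limit.

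\textbf{Step 3: passage to the limit and regularity.} With~$\|\psi_\eps\|_E$ bounded, I extract~$\psi_\eps\rightharpoonup\psi$ in~$E$ with strong convergence in~$L^q$ for~$q<3$. The terms~$\eps|\psi_\eps|^{p-2}\psi_\eps$ vanish in the limit, so~$\psi$ is a weak solution of~\eqref{eq:NDE-spatial}. It is nonzero, because strong convergence after projection gives~$\Phi_0(\psi)=\lim c_\eps\ge\rho$. Finally, I will bootstrap using elliptic regularity of~$H_0$ on~$\mathbb{T}^3$, subcritical growth, and~$F\in C^{1,\alpha}$ to get~$\psi\in W^{1,q}$ for all~$q$, hence~$C^{0,\gamma}$, then~$C^{1,\gamma}$ by Schauder. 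This gives a nonzero~$C^1$ periodic solution for every~$a\in(m_*,m)$.
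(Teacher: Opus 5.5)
Your plan is correct and is essentially the paper's proof. It has the same coercive $\eps|\psi|^{p}$ perturbation and the same linking over $E^-\oplus\R^+e$ with a constant upper spinor $e$. It also has the same blow-up contradiction: (F4) pushes the normalized limit into the null cone, while (F5), the small min-max level and the $a$-uniform gap of $\pD-a$ off $\Eigen(\pD;m)$ push it onto the constant upper spinors. The only differences are that you do the last step by testing in $H^{\frac12}$ and quote an abstract linking theorem, where the paper uses an $L^{\frac32}\to L^3$ projection estimate and builds the deformation by hand.

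Some small fixes:
(i) Take $p\in[\alpha_2,3)$ (the paper uses $p=\alpha_2$), so that the bound on $\eps\int|\psi|^{p}$ along (PS) sequences controls $\p_\psi F$.
(ii) (F4) alone leaves an additive $A_4\vol(\mathbb{T}^3)$ in your bound for $\bar c$, so combine it with $F\ge0$ for bounded $\lambda$. This gives $\bar c=O(m-a)$ (the paper's $\frac{m-a}{2m}R^2$) rather than $(m-a)^{\nu/(\nu-1)}$, which is still enough since only $\bar c\to0$ is used.
(iii) The $\eps$-term has no sign when tested against $\psi^+_\eps-\psi^-_\eps$. It is nevertheless $O(\eps^{1/p}\|\psi_\eps\|_{H^{1/2}})$ by $\eps\int|\psi_\eps|^{p}\le C$, hence negligible.
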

Unlike the result in~\cite{EstebanSere1995stationary}, we do not obtain existence of nonzero solutions for each~$a\in (0,m)$.
This is because we cannot get uniform estimates for perturbed solutions for small~$a$.
A compactness-at-infinity type argument implies the uniform~$L^3$ estimate for~$a$ close to~$m$. 
We refer to Sections~\ref{sect:uniform estimate} and~\ref{sect:last} for more technical details. 

We also remark that, if~$F$ depends only on~$\psi\in\C^4$ and does not depend on~$x\in\mathbb{T}^3$ explicitly, then one can search for constant solutions of 
\begin{align}
    m\gamma^0\psi -a \psi = \dd F(\psi). 
\end{align}
This can in turn be obtained by looking at some function defined on~$\C^4\cong \R^8$. 
However, if~$F$ depends genuinely on~$x\in\mathbb{T}^3$, then constant solutions are in general not expected.

We outline the proof here. 
The equation~\eqref{eq:NDE-spatial} admits a variational structure, namely it is the Euler--Lagrange equation of an action functional~$J$, see Section~\ref{sect:variational str}. 
Unfortunately this functional~$J$ does not satisfy the Palais-Smale condition in general. 
As a remedy we introduce a coercive perturbation with a small parameter~$\eps\in (0,1]$ in Section~\ref{sect:perturbation}. 
Then we show that the perturbed functionals meet the Palais-Smale condition, although the norms of the~$(PS)_c$ sequences are bounded by a quantity depending on~$\frac{1}{\eps}$. 
The spectral information yields a linking geometry whose associated level estimates are uniform in the perturbation parameter~$\eps$. 
Since the gradients of the perturbed functionals fail to be locally Lipschitz, we take suitable approximating vector fields which are bounded and locally Lipschitz to generate flow-type deformations. 
Then we use such deformations to define the min-max levels and show that they are critical levels.
These are done in Section~\ref{sect:perturbation}. 
By carefully estimating the critical levels and using the structure of~$F$, in particular (F5), we can get uniform estimates for the perturbed solutions as long as~$a$ is close to~$m$, see Section~\ref{sect:uniform estimate}. 
This allows us to pass along a subsequence to a nontrivial limit which is the desired nontrivial periodic solution, see Section~\ref{sect:last}.

\

When there is an external field, the following equation is also considered in~\cite{DingRuf2008Solutions}:
\begin{align}\label{eq:NLD with external field}
    -i\gamma^0\gamma^k\p_k\psi + m\gamma^0\psi -a\psi - M\psi -\p_\psi F(x, \psi)=0,
\end{align}
where~$M\colon \mathbb{T}^3\to\R$ stands for the external field, which can be either matrix-valued or scalar-valued.
Here we restrict attention to a scalar-valued external field~$M\in C^2(\mathbb{T}^3,\R)$ which does not oscillate much.
Then the argument carries over to this case, and we also obtain a nontrivial periodic~$C^1$ solution. 

\begin{prop}\label{prop:external}
    Assume (F1-5).  
    Then there exists~$m_{*}\in (0,m)$ such that if~$a\in (0,m)$ and~$M\in C^2(\mathbb{T}^3,\R)$ satisfies 
    \begin{align}
        m_{*} < a + M(x) < m, \qquad \forall x\in\mathbb{T}^3,
    \end{align}
    then the equation~\eqref{eq:NLD with external field} admits a nonzero~$C^1$ periodic solution. 
\end{prop}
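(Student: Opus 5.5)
We would rerun the argument for Theorem~\ref{thm:main} with the scalar frequency $a$ replaced by the variable function $b(x):=a+M(x)$, which by hypothesis takes values in $(m_*,m)$. The equation~\eqref{eq:NLD with external field} is the Euler--Lagrange equation of
\begin{align}
    J_M(\psi)=\frac12\int_{\mathbb{T}^3}\Abracket{(-i\gamma^0\gamma^k\p_k+m\gamma^0)\psi,\psi}\dv-\frac12\int_{\mathbb{T}^3} b(x)|\psi|^2\dv-\int_{\mathbb{T}^3}F(x,\psi)\dv
\end{align}
on $H^{\frac12}(\mathbb{T}^3,\C^4)$. The operator $H_0=-i\gamma^0\gamma^k\p_k+m\gamma^0$ is the same as before, so its spectrum on $\mathbb{T}^3$ is as computed for the theorem: the spectrum lies in $(-\infty,-m]\cup[m,\infty)$, $\pm m$ are eigenvalues with constant eigenspinors in the $\pm1$ eigenspaces of $\gamma^0$, and there is a gap up to the next eigenvalue. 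The quadratic part $H_0-b$ is a bounded, indeed $C^2$, multiplicative perturbation of $H_0-a$. Since $m_*\le b\le m$, all the two-sided quadratic estimates used for the theorem still hold. This includes positivity on $E^+$ modulo the kernel-like direction $E_m$, negative definiteness on $E^-$, and the upper bound on the linking set $E^-\oplus\R e$ with $e\in E_m$. We only replace $a$ by $\inf b$ or $\sup b$ as appropriate. The key fact is $(m-b(x))\in(0,m-m_*)$ pointwise, so on $E_m$ we have $0<\int(m-b)|\psi|^2\le (m-m_*)\|\psi\|_2^2$, which is exactly the smallness used for $a$ near $m$.

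\textbf{Step 1.} Introduce the same coercive perturbation $J_{M,\eps}=J_M-\eps\cdot(\text{coercive term})$ as in Section~\ref{sect:perturbation}. The Palais--Smale property and the construction of the locally Lipschitz pseudo-gradient flow involve only the principal part and $F$; $b$ enters as a compact lower-order term, since multiplication by $b$ is compact on $H^{\frac12}\to H^{-\frac12}$. So the min-max levels $c_\eps$ and the critical points $\psi_\eps$ are obtained verbatim.

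\textbf{Step 2.} Obtain level bounds uniform in $\eps$. The lower bound comes from the linking with a small sphere in $E^+$, using $b<m$ together with the eigenvalue gap above $m$. For the upper bound we estimate $J_{M,\eps}$ on $E^-\oplus\R^+e$. This is where $\sup b<m$ and (F4) give a bound $c_\eps\le C(m-m_*)^{\theta}$, with the same exponent as in the scalar case, because only the pointwise bound $m-b\le m-m_*$ is used.

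\textbf{Step 3 (main obstacle).} Prove the uniform $L^3$ estimate of Section~\ref{sect:uniform estimate}. Suppose $\|\psi_\eps\|_3\to\infty$ and normalize by setting $\phi_\eps=\psi_\eps/\|\psi_\eps\|_3$. Then (F4) and the level bound force any limit $\phi$ into the null cone $\bar\phi\phi=0$. The projection estimate and (F5) then show that $\phi$ concentrates on $E_m$ with an error controlled by $m-m_*$. The new point is that $E_m$ consists of constants, while $b$ is nonconstant. However, $\int b|\phi|^2$ for $\phi\in E_m$ is still squeezed between $m_*\|\phi\|_2^2$ and $m\|\phi\|_2^2$. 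Moreover, the off-diagonal coupling $P^\pm(b\,\phi)$ with $\phi\in E_m$ has size $\|\nabla M\|$. Here the $C^2$ hypothesis on $M$ is used to bound this coupling in $H^{\frac12}$. Because $m_*<a+M<m$ forces $\operatorname{osc}M<m-m_*$, the coupling is small once $m_*$ is close to $m$. This yields a contradiction exactly as in the theorem, after possibly enlarging $m_*$.

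\textbf{Step 4.} Pass to the limit $\eps\to0$ as in Section~\ref{sect:last}. The uniform bound gives weak convergence of $\psi_\eps$ to a solution $\psi$ of~\eqref{eq:NLD with external field}. Nontriviality follows from the uniform positive lower bound on $c_\eps$ together with the compactness of the lower-order terms. $C^1$ regularity follows from elliptic bootstrap: $F\in C^{1,\alpha}$, $M\in C^2$, and $H_0$ is elliptic, so Schauder estimates apply.
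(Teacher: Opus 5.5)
Your Steps 1, 2 and 4 follow the paper's outline. The Palais--Smale and deformation arguments are unchanged, the small sphere uses $\max_{\T^3}(a+M)<m$, and the top of the cylinder and the level bound come from Lemma~\ref{lemma:local control} with $\mu=\min_{\T^3}(a+M)$.

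The gap is in Step 3, which is the only place where the non-constancy of $a+M$ matters. You say the coupling produced by $b=a+M$ on $E_m$ has size $\|\nabla M\|$, measured in $H^{\frac12}$. You then conclude it is small because $\operatorname{osc}M<m-m_*$. That inference fails, because a small oscillation does not make the gradient small. Take $M=\eta\sin(2\pi N\theta^1/\ell_1)$ with $\eta$ small and $N$ large. So the contradiction does not close. Any repair that needs $\|\nabla M\|$ small would also make $m_*$ depend on $M$. The statement asks for a single $m_*$ that works for every admissible pair $(a,M)$.

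The missing idea is to keep the frequency in the projection estimate constant. The variable part then moves to the right-hand side, where it is measured in $L^{3/2}$ rather than $H^{\frac12}$. Set $b_0:=\min_{\T^3}(a+M)\in(m_*,m)$ and $W:=a+M-b_0$, so that $0\le W\le m-b_0<m-m_*$. The normalized spinors satisfy
\[
(\pD-b_0)\varphi_n=W\varphi_n+\lambda_n^{-1}\p_\psi F(x,\psi_{\eps_n})+\alpha_2\eps_n|\psi_{\eps_n}|^{\alpha_2-2}\varphi_n .
\]
Since $\proj_m^\perp$ commutes with $\pD-b_0$, the constant-coefficient estimate
\[
\|\proj_m^\perp\varphi\|_{L^3}\le C\|\proj_m^\perp(\pD-b_0)\varphi\|_{L^{3/2}}
\]
applies, with $C=C(m,\T^3)$ uniform in $b_0\in[0,m]$.

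The new term obeys $\|W\varphi_n\|_{L^{3/2}}\le C\|W\|_{L^\infty}\|\varphi_n\|_{L^3}\le C(m-m_*)$, which involves no derivatives of $M$. Combine this with (F5) and the level bound $c_2\le\frac{m-b_0}{2m}R(b_0)^2$. This gives $\kappa_2\le C\bigl(m-m_*+\delta+C_\delta c_2^{1/\beta}+\eps_n^{1/\alpha_2}/\lambda_n\bigr)$. Choosing first $\delta$, then $m_*$, then $n$ yields a contradiction, with $m_*$ independent of $M$. In particular, the $C^2$ hypothesis on $M$ plays no role in the uniform estimate.
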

Note that this differs from the results in~\cite{DingRuf2008Solutions} not only in the non-coerciveness of the nonlinearity, but also in the growth order of the nonlinearity. 
They dealt essentially with the quadratic growth case, and we are concerned with the super-quadratic case, hence the results are not overlapping. 
Furthermore, multiplicity results will be addressed in a future work; see Remark~\ref{rmk:multiplicity}.

\

\noindent{\bf Acknowledgments.} We would like to thank Nadine Grosse, Long Meng and Zhenyao Sun for helpful conversations on these problems.


\section{Preliminaries} \label{sect:prelim}

In this section we collect some basic facts about the Dirac operators and the working space~$H^{\frac{1}{2}}(\mathbb{T}^3,\C^4)$.

\subsection{The Dirac operators}

We will denote
\begin{align}
    \D= \sum_{k=1}^3 -i\gamma^0\gamma^k\p_k, & &
    \mbox{and} & &
    \pD=\sum_{k=1}^3 -i\gamma^0\gamma^k\p_k+m\gamma^0 .
\end{align}
Note that the coefficients of~$\D$ satisfy
\begin{align}
    (-i\gamma^0\gamma^k)(-i\gamma^0\gamma^j)+ (-i\gamma^0\gamma^j)(-i\gamma^0\gamma^k)=-2\delta^{jk} I_4, \qquad \forall j,k\in \braces{1,2,3}.
\end{align}
Thus~$\D$ satisfies the geometers' convention.
Actually, the relation between the gamma matrices and the Pauli matrices already indicates that~$\D$ is intimately related to the intrinsic Dirac operator of~$\R^3$ or~$\mathbb{T}^3$.
In matrix form, we have
\begin{align}
    \D =
    \begin{pmatrix}
        0 & -i\mathbf{\sigma}\cdot\nabla \\ -i\mathbf{\sigma}\cdot\nabla & 0
    \end{pmatrix}
    = \begin{pmatrix}
        0 & \pd \\ \pd & 0
    \end{pmatrix}
\end{align}
where
\begin{align}
    \pd\equiv -i\mathbf{\sigma}\cdot \nabla = \sum_{k=1}^3 -i\sigma^k\p_k
\end{align}
denotes the intrinsic Dirac operator on~$\R^3$ and on~$\mathbb{T}^3$ (equipped 
with the trivial spin structure) acting on~$\C^2$-valued functions which are precisely the spinors on~$\R^3$ or~$\mathbb{T}^3$.
This agrees with the fact that the spinor bundle on~$\R^{1+3}$ restricts on~$\R^3$ to a double of the spinor bundles of~$\R^3$ which is inherited by the quotient~$\mathbb{T}^3$.
If multiplied by the timelike normal vector field (by Clifford multiplication), we get a double of the intrinsic Dirac operator, namely
\begin{align}
    \nu\cdot \D=
    \begin{pmatrix}
        0 & I_2 \\ I_2 & 0
    \end{pmatrix} \D
    =\begin{pmatrix}
        \pd & 0 \\ 0 & \pd
    \end{pmatrix}.
\end{align}
Thus we will temporarily call~$\D$ the \textbf{geometric Dirac operator}.
Note that the constant functions are parallel spinors for this spinor bundle, which are also in the kernel of~$\D$.

\begin{rmk}
    For nontrivial spin structures, the spinor bundle is also nontrivial and the constant functions are no longer sections of the spinor bundle; indeed there are no harmonic spinors for the nontrivial spin structure.

    The presence of the kernel of~$\D$, which coincides with the constant sections, is crucial for our construction of the variational scheme in this work.
    The case of nontrivial spin structures will be the subject of a future work.
\end{rmk}

In physical models of massive particles, the operator~$\pD=\D+m\gamma^0$ is frequently used to define the Lagrangian in field theory, see~\cite{Thaller1992Dirac}.
When the space-time is~$\R^{1+3}$, the mass~$m>0$ provides a gap in the spectrum of the Dirac operator~$\pD$ which is essential for the variational theory and for applications in QFT and quantum mechanics~\cite{EstebanSere1995stationary,Esteban2002overview,Sere2023new}.
This fact will also be used in our treatment of the equation~\eqref{eq:NDE-spatial}, while one should note that the geometric Dirac operator~$\D$ has~$0$ as an eigenvalue.
Note that~$\pD$ is the operator originally found by P. Dirac in 1928~\cite{Dirac1928The1,Dirac1928The2, Dirac1931Quantised} as a square root of the Klein-Gordon operator, which also contains a mass term.
For these reasons we will call~$\pD$ the \textbf{physical Dirac operator}.

\subsection{Spectrum of the Dirac operators}

The spectrum of the intrinsic Dirac operator~$\pd$ on~$\mathbb{T}^3$ is explicitly known; see e.g.~\cite{Friedrich1984zur} and~\cite[Chapter 2.1]{Ginoux2009Dirac}. We briefly recall it here. 

Let~$\Gamma\subset\R^3$ be a 3-dimensional lattice with dual lattice~$\Gamma^*$, namely
\begin{align}
    \Gamma^*=\braces{\zeta^*\in (\R^3)^*\mid \zeta^*(\Gamma)\subset\mathbb{Z}}.
\end{align}
For simplicity we may consider
\begin{align}
    \Gamma=\ell_1\bZ \times \ell_2\bZ \times \ell_3\bZ, & &
    \mbox{ with } & &
    \Gamma^*= \frac{1}{\ell_1}\bZ\times \frac{1}{\ell_2}\bZ \times \frac{1}{\ell_3}\bZ.
\end{align}
Without loss of generality, we assume~$0<\ell_1\leq \ell_2 \leq \ell_3 <+\infty$.
For this lattice, the corresponding torus is
\begin{align}
    \mathbb{T}^3=\mathbb{T}^3_\Gamma = \faktor{\R^3}{\Gamma}=\sph^1(\frac{\ell_1}{2\pi}) \times \sph^1(\frac{\ell_2}{2\pi}) \times \sph^1(\frac{\ell_3}{2\pi}),
\end{align}
where~$\sph^1(r)$ denotes the standard circle with radius~$r>0$.
The global periodic coordinates of~$\mathbb{T}^3_{\Gamma}$ are~$(\theta^1,\theta^2,\theta^3)$.
Then
\begin{align}
    \int_{\mathbb{T}^3} \dv = \int_{\mathbb{T}^3} \dd\theta^1\dd\theta^2\dd\theta^3= \ell_1 \ell_2 \ell_3.
\end{align}
We remark that these parameters~$\ell_1,\ell_2,\ell_3$ are not essentially needed in this work; we only want to indicate the difference between~$\Gamma$ and its dual lattice~$\Gamma^*$.
However, they will play a role in future work, where we need to take one of them, sometimes all of them, to be relatively small.

It is well-known that
\begin{align}
    \Spect(\pd_{\mathbb{T}^3})=\braces{\pm2\pi |\zeta^*| \; \mid\;  \zeta^*\in \Gamma^* }.
\end{align}
Furthermore, the complex multiplicity of~$0$ is~$2^{[\frac{3}{2}]}=2$, and the complex multiplicity of each nonzero eigenvalue provided by~$\zeta^*$ is~$1$.
But note that different~$\zeta^*$'s may give rise to the same eigenvalue as long as they have the same length.
Note that with respect to this trivial spin structure,~$\Spect(\pd)$ is symmetric about~$0$.

Concerning the geometric Dirac operator~$\D$, which is essentially a double of~$\pd$, it is not surprising that \emph{$\D$ has the same eigenvalues as~$\pd$ but with the multiplicities doubled}.
Indeed, by writing
\begin{align}\label{eq:up-low}
    \psi=\begin{pmatrix}
        \psi^1 \\ \psi^2 \\ \psi^3 \\ \psi^4
    \end{pmatrix}
    =\begin{pmatrix}
        \psi_{\up} \\ \psi_{\low}
    \end{pmatrix}, \qquad
    \mbox{ with }
    \psi_{\up}=\begin{pmatrix}    \psi^1 \\ \psi^2 \end{pmatrix}
    \quad \mbox{ and } \quad
    \psi_{\low}=\begin{pmatrix}\psi^3 \\ \psi^4   \end{pmatrix},
\end{align}
we see that~$\D\psi=\lambda\psi$ is equivalent to
\begin{align}
    \begin{cases}
        \pd\psi_{\up}=&\lambda\psi_{\low}, \\
        \pd\psi_{\low}=&\lambda\psi_{\up}.
    \end{cases}
\end{align}
For each pair of eigenvalues~$\lambda_k$ and~$\lambda_{-k}=-\lambda_k$ in~$\Spect(\pd)$, let~$\varphi_k$ and~$\varphi_{-k}$ be the corresponding eigenspinors.
Then
\begin{align}
    \begin{pmatrix} \varphi_k \\ \varphi_k \end{pmatrix}, \begin{pmatrix} \varphi_{-k} \\ -\varphi_{-k}\end{pmatrix} \in \Eigen(\D; \lambda_k).
\end{align}
Hence~$\Spect(\pd)\subset\Spect(\D)$ and the multiplicities are at least doubled.
Conversely, if~$\D\psi=\lambda\psi$, then
\begin{align}
    \pd(\psi_{\up}+\psi_{\low}) = \lambda(\psi_{\up}+\psi_{\low}), & &
    \pd(\psi_{\up}-\psi_{\low}) = -\lambda(\psi_{\up}-\psi_{\low}),
\end{align}
and~$\psi_{\up}\pm \psi_{\low}$ cannot simultaneously vanish.
Hence~$\Spect(\D)\subset\Spect(\pd)$.

The counting on the multiplicity follows by a density argument.
Alternatively, we can count as follows. 
Note first that
\begin{align}
    \D=\begin{pmatrix} 0&\pd \\ \pd & 0 \end{pmatrix}.
\end{align}
After the unitary change of variables
\begin{align}
    \begin{pmatrix} \psi_{\up} \\ \psi_{\low}\end{pmatrix}
    \mapsto 
    \frac{1}{\sqrt2}
    \begin{pmatrix}
        \psi_{\up}+\psi_{\low}\\
        \psi_{\up}-\psi_{\low}
    \end{pmatrix},
\end{align}
the operator~$\D$ becomes~$\pd\oplus(-\pd)$. Therefore,
\begin{align}
    \Eigen(\D;\mu)
    \cong \Eigen(\pd;\mu)\oplus\Eigen(\pd;-\mu).
\end{align}
Since~$\Spect(\pd)$ is symmetric with equal multiplicities,
\begin{align}
    \Mtp(\D;\mu)=2\Mtp(\pd;\mu),\qquad \mu\neq0,
\end{align}
while~$\Mtp(\D;0)=2\Mtp(\pd;0)=4$.

It is crucial for us to understand the spectrum of~$\pD$.

\begin{lemma}\label{lemma:spectrum D}
    The spectrum of~$\pD$ is given by
    \begin{align}
        \Spect(\pD)
        =\braces{\pm\sqrt{\mu^2+ m^2} \; \mid \; \mu\in\Spect(\D)}
    \end{align}
    with multiplicities
    \begin{align}
        \Mtp&(\pD; \sqrt{\mu^2+m^2})= \Mtp(\pD; -\sqrt{\mu^2+m^2}) = \Mtp(\D; \mu),\quad \mbox{ for } \mu\neq 0, \\
        \Mtp&(\pD;m)=\Mtp(\pD;-m)=\frac{1}{2}\Mtp(\D;0)=2.
    \end{align}
\end{lemma}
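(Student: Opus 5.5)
The plan is to exploit the anticommutation $\D\gamma^0+\gamma^0\D=0$, which follows from $\gamma^0\gamma^k=-\gamma^k\gamma^0$ and $(\gamma^0)^2=I_4$. From it,
\begin{align}
    \pD^2=(\D+m\gamma^0)^2=\D^2+m(\D\gamma^0+\gamma^0\D)+m^2(\gamma^0)^2=\D^2+m^2 .
\end{align}
Both $\D$ and $\pD$ are self-adjoint elliptic first order operators on the compact torus. Hence $\pD$ has discrete real spectrum with finite multiplicities and an orthonormal eigenbasis. Any eigenvalue $\lambda$ of $\pD$ satisfies $\lambda^2\in\Spect(\D^2+m^2)=\{\mu^2+m^2\mid \mu\in\Spect(\D)\}$. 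This gives $\Spect(\pD)\subset\{\pm\sqrt{\mu^2+m^2}\}$.

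For the multiplicities and the reverse inclusion, fix $\mu\ge 0$ and set $E_\mu:=\ker(\D^2-\mu^2)$. This is $\Eigen(\D;\mu)\oplus\Eigen(\D;-\mu)$ for $\mu>0$, and $\ker\D$ for $\mu=0$. The space $E_\mu$ is invariant under both $\D$ and $\gamma^0$, because $\gamma^0$ commutes with $\D^2$. On $E_\mu$ the operator $\pD$ restricts to a Hermitian operator with square $(\mu^2+m^2)\id$, so its only possible eigenvalues there are $\pm\sqrt{\mu^2+m^2}$. To compute the two multiplicities I use a trace argument. The traces satisfy $\mathrm{tr}(\pD|_{E_\mu})=\mathrm{tr}(\D|_{E_\mu})+m\,\mathrm{tr}(\gamma^0|_{E_\mu})$.

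\emph{Case $\mu>0$.} The map $\gamma^0$ swaps $\Eigen(\D;\mu)$ and $\Eigen(\D;-\mu)$, since $\D\gamma^0\psi=-\gamma^0\D\psi$. So $\mathrm{tr}(\D|_{E_\mu})=\mu(\Mtp(\D;\mu)-\Mtp(\D;-\mu))=0$ by the symmetry of $\Spect(\D)$. Also $\mathrm{tr}(\gamma^0|_{E_\mu})=0$, because in a basis of the form $\{\psi_j\}\cup\{\gamma^0\psi_j\}$ with $\psi_j$ spanning $\Eigen(\D;\mu)$ the operator $\gamma^0$ has zero diagonal blocks. Hence $\pD|_{E_\mu}$ is traceless with eigenvalues $\pm\sqrt{\mu^2+m^2}$. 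Each eigenvalue therefore has multiplicity $\tfrac12\dim E_\mu=\Mtp(\D;\mu)$, which is nonzero, so both values really occur.

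\emph{Case $\mu=0$.} Here $E_0$ consists of the constant spinors $\C^4$, and $\pD$ acts on it as $m\gamma^0$. Its eigenvalues are $\pm m$, each with multiplicity $2=\tfrac12\Mtp(\D;0)$.

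Finally, the spaces $E_\mu$ for distinct $\mu\ge0$ are mutually orthogonal, and their sum is dense in $L^2$ because $\D$ has an $L^2$ eigenbasis. Each $E_\mu$ is invariant under $\pD$, so the eigenspaces of $\pD$ are exactly the sums of the pieces found above. Distinct $\mu\ge0$ give distinct values $\sqrt{\mu^2+m^2}$, so there is no merging between different $\mu$. This gives the full spectrum with the stated multiplicities.

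The main obstacle is the bookkeeping at $\mu>0$, where one must check that $\gamma^0$ indeed exchanges the $\pm\mu$ eigenspaces of $\D$. If the trace argument feels slick, the fallback is an explicit construction. For $\D\varphi=\mu\varphi$, the spinors $(\D+m\gamma^0\pm\sqrt{\mu^2+m^2})(\varphi)$ give eigenvectors of $\pD$ wherever they are nonzero. Checking that they span $E_\mu$ comes down to the same linear algebra as the trace argument.
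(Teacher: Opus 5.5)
Your proof is correct, and it shares the paper's skeleton. In both, $\pD^2=\D^2+m^2$ forces $\lambda^2=\mu^2+m^2$. Both then work in the $\pD$-invariant space $E_\mu=\Eigen(\D;\mu)\oplus\Eigen(\D;-\mu)$ of dimension $2\Mtp(\D;\mu)$, and both treat $\mu=0$ by direct inspection of the constant spinors.

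The difference is how you count multiplicities for $\mu>0$.

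\textbf{The paper's route.} It does exactly what you list as your fallback. For $\psi\in\Eigen(\D;\mu)$ it solves for $t$ so that $\psi+t\gamma^0\psi$ is an eigenspinor, obtaining $\psi+\frac{-\mu\pm\sqrt{\mu^2+m^2}}{m}\gamma^0\psi$. It checks these are nonzero, since $t\neq\pm1$ when $\mu\neq 0$. This yields $\Mtp(\D;\mu)$ independent eigenspinors for each sign. Because together they already fill the $2\Mtp(\D;\mu)$-dimensional space $E_\mu$, both multiplicities are exactly $\Mtp(\D;\mu)$.

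\textbf{Your route.} You replace the construction with a trace count. $\pD|_{E_\mu}$ is Hermitian, squares to $(\mu^2+m^2)\id$, and is traceless. Hence its two eigenvalues $\pm\sqrt{\mu^2+m^2}$ have equal multiplicity, and each multiplicity is $\tfrac12\dim E_\mu$.

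\textbf{What each buys.} The paper's approach gives explicit eigenspinors, at the cost of a nonvanishing and independence check. Independence follows from orthogonality of $\Eigen(\D;\pm\mu)$, which the paper leaves implicit. Your approach is basis-free and uses only the anticommutation $\D\gamma^0=-\gamma^0\D$. That identity already shows $\gamma^0$ is an isomorphism $\Eigen(\D;\mu)\to\Eigen(\D;-\mu)$. So $\Mtp(\D;\mu)=\Mtp(\D;-\mu)$, and with it $\mathrm{tr}(\D|_{E_\mu})=0$, follow without appealing to the known symmetry of $\Spect(\pd)$ on the torus, as you currently do. With that observation, your $\mu>0$ count needs nothing about the specific torus spectrum beyond its discreteness.
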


\begin{proof}
If~$\lambda\in\Spect(\pD)$ has a nonzero eigenvector~$\psi\neq 0$, i.e.~$\pD\psi=\lambda\psi$, then
\begin{align}
    \lambda^2\psi = \pD^2 \psi = \D^2\psi+m^2\psi
\end{align}
so that~$\D^2\psi= (\lambda^2-m^2)\psi$.
We have seen that~$\D$ has eigenvalues~$\mu_j$'s, hence~$\D^2$ has eigenvalues~$\mu_j^2$'s.
Thus~$\lambda^2= \mu^2+m^2$ for some~$\mu\in\Spect(\D)$.
We observe that both~$\sqrt{\mu^2+m^2}$ and~$-\sqrt{\mu^2+m^2}$ appear as eigenvalues in the spectrum of~$\pD$.
Indeed, for~$\psi\in\Eigen(\D;\mu)$, consider the spinor~$\psi+t\gamma^0\psi$ with~$t\in\R$ to be determined. 
Then
\begin{align}
    \pD (\psi+ t\gamma^0\psi)
    =& \D\psi + m\gamma^0\psi + t\D\gamma^0\psi + tm \gamma^0\gamma^0\psi \\
    =&(\mu+tm)\psi+ (m-t\mu)\gamma^0\psi \\
    =&(\mu+tm)\parenthesis{\psi+ \frac{m-t\mu}{\mu+tm}\gamma^0\psi }.
\end{align}
It remains to solve the equation~$t=\frac{m-t\mu}{\mu+tm}$, whose roots are
\begin{align}
    t= \frac{-\mu\pm \sqrt{\mu^2+m^2}}{m}.
\end{align}
In particular,~$\mu+tm = \pm\sqrt{\mu^2+m^2}$, which would be an eigenvalue of~$\pD$ as long as~$\psi+t\gamma^0\psi\neq 0$.
If~$\mu\neq 0$, then~$t\neq \pm 1$, hence~$\psi+ t\gamma^0\psi\neq 0$.
This implies that any~$\mu\neq 0$ corresponds to two eigenvalues~$\braces{\pm\sqrt{\mu^2+m^2}}\subset\Spect(\pD)$, with
\begin{align}
    \pD \parenthesis{\psi + \frac{-\mu+\sqrt{\mu^2+ m^2}}{m}\gamma^0\psi}
    =& \sqrt{\mu^2+m^2} \parenthesis{\psi + \frac{-\mu+\sqrt{\mu^2+ m^2}}{m}\gamma^0\psi}, \\
    \pD \parenthesis{\psi + \frac{-\mu-\sqrt{\mu^2+ m^2}}{m}\gamma^0\psi}
    =& -\sqrt{\mu^2+m^2} \parenthesis{\psi + \frac{-\mu-\sqrt{\mu^2+ m^2}}{m}\gamma^0\psi}.
\end{align}
Let~$d=\Mtp(\D;\mu)$ for~$\mu>0$. The above construction gives
$d$ linearly independent eigenspinors for each of the eigenvalues
$\pm\sqrt{\mu^2+m^2}$.
On the other hand,
\begin{align}
    \Eigen(\D^2;\mu^2)
=\Eigen(\D;\mu)\oplus\Eigen(\D;-\mu)
\end{align}
has dimension~$2d$ and is invariant under~$\pD$.
Hence both eigenspaces of~$\pD$ have dimension exactly~$d$.

The zero eigenvalue of~$\D$ has eigenspinors given by the constant vectors
\begin{align}
    \psi_{01}=\begin{pmatrix}
        1\\ 0 \\ 0\\ 0
    \end{pmatrix}, \;
    \psi_{02}=\begin{pmatrix}
        0\\1\\0\\0
    \end{pmatrix}, \;
    \psi_{03}=\begin{pmatrix}
        0\\0\\1\\0
    \end{pmatrix}, \;
    \psi_{04}=\begin{pmatrix}
        0\\0\\0\\1
    \end{pmatrix}.
\end{align}
Then~$\psi_{0j}+\gamma^0\psi_{0j}$,~$j=1,2$, are eigenspinors of~$m$ for~$\pD$, while~$\psi_{0j}-\gamma^0\psi_{0j}$,~$j=3,4$, are eigenspinors of~$-m$ for~$\pD$.
That is, the 4-dimensional space~$\Eigen(\D;0)$ splits into~$\Eigen(\pD;m)\oplus\Eigen(\pD;-m)$, with each summand being 2-dimensional.
\end{proof}

\subsection{The working space}

With the above spectral information of~$\pD$, we can work on the fractional Sobolev space~$H^{\frac{1}{2}}(\mathbb{T}^3,\C^4)$, whose definition is recalled briefly in the following.

We have seen that~$\pD$ is a self-adjoint elliptic operator and the spectrum consists of nonzero eigenvalues.
Let~$\tau \colon \mathcal{B}(\R)\to \operatorname{Proj}(L^2(\mathbb{T}^3,\C^4))$ be the spectral measure of~$\pD$, where~$\mathcal{B}(\R)$ is the Borel~$\sigma$-algebra of~$\R$.
Then the operator~$\pD$ admits the spectral resolution
\begin{align}
    \pD =\int_\R \lambda \dd\tau(\lambda).
\end{align}

The absolute value of \(\pD\), denoted by \(|\pD|\), is a non-negative self-adjoint operator defined by
\[
|\pD| = \int_{\mathbb{R}} |\lambda| \, \dd\tau(\lambda),
\]
with domain \(\Dom(|\pD|) := \left\{ \psi \in L^2(\mathbb{T}^3, \mathbb{C}^4) \mid \int_{\mathbb{R}} |\lambda|^2 \, \dd\Abracket{\tau(\lambda)\psi,\psi}_{L^2} < \infty \right\}\).
The operator~$|\pD|^{\frac{1}{2}}$ is similarly defined by
\[
|\pD|^{1/2} = \int_{\mathbb{R}} |\lambda|^{1/2} \, \dd\tau(\lambda),
\]
with domain
\begin{align}
   \Dom(|\pD|^{1/2}) = \left\{ \psi \in L^2(\mathbb{T}^3, \mathbb{C}^4) \mid \int_{\mathbb{R}} |\lambda| \, \dd\Abracket{\tau(\lambda)\psi,\psi}_{L^2}  < \infty \right\}.
\end{align}

We will work with the space
\begin{align}
    H^{\frac{1}{2}}(\mathbb{T}^3, \mathbb{C}^4)
    \coloneqq \braces{ \psi\in L^2(\mathbb{T}^3;\C^4)\;\mid \; \|\psi\|_{H^{\frac{1}{2}}} \equiv \parenthesis{\|\psi\|_{L^2}^2 + \||\pD|^{\frac{1}{2}}\psi\|_{L^2}^2}^{\frac{1}{2}} <+\infty }.
\end{align}
Note that for each~$\psi\in  H^{\frac12}(\mathbb{T}^3, \mathbb{C}^4)$, we have
  \begin{align}
      \int_{\mathbb{T}^3} ||\pD|^{1/2}\psi|^2 \dv 
      = \langle |\pD|^{1/2}\psi, |\pD|^{1/2}\psi \rangle_{L^2(\mathbb{T}^3, \mathbb{C}^4)}
      = \langle |\pD|\psi, \psi \rangle_{H^{-\frac{1}{2}}\times H^{\frac{1}{2}}} 
      = \int_{\mathbb{R}} |\lambda| \,\dd\Abracket{\tau(\lambda)\psi,\psi}_{L^2} .
  \end{align}
Since the smallest eigenvalue of~$|\pD|$ is~$m>0$, we see that 
\begin{align}
    \|\psi\|_{L^2} \leq \frac{1}{\sqrt{m}}\| |\pD|^{\frac{1}{2}} \psi\|_{L^2}. 
\end{align}
Thus we have an equivalent norm on~$H^{\frac{1}{2}}(\mathbb{T}^3,\C^4)$ given by 
\begin{align}\label{eq:equiv norm}
    \tnorm{\psi}_{H^{1/2}(\T^3,\C^4)}:= \| |\pD|^{\frac{1}{2}} \psi\|_{L^2(\mathbb{T}^3,\C^4)}= \parenthesis{ \langle |\pD|\psi, \psi \rangle_{H^{-\frac{1}{2}}\times H^{\frac{1}{2}}} }^{\frac{1}{2}}.
\end{align}
We remark that for~$\psi\in H^{\frac{1}{2}}(\mathbb{T}^3,\C^4)$, the expression~$\int_{\T^3}\Abracket{\D\psi,\psi}\dv$ will always be understood in the duality pairing sense.

According to the spectral information of~$\pD$, we define the closed subspaces spanned by eigenspinors corresponding to positive and negative eigenvalues, respectively, as
\begin{align}
    H^{\frac{1}{2},+}\coloneqq 
    \overline{\bigoplus_{\lambda>0} \Eigen(\pD; \lambda)}^{H^{\frac{1}{2}}}, 
    \quad \mbox{resp.}
    \quad 
    H^{\frac{1}{2},-}\coloneqq 
    \overline{\bigoplus_{\lambda<0} \Eigen(\pD; \lambda)}^{H^{\frac{1}{2}}}. 
\end{align}
Then we have the following orthogonal decomposition 
\begin{align}
    H^{\frac{1}{2}}(\mathbb{T}^3, \mathbb{C}^4)
    =H^{\frac{1}{2},+}\oplus H^{\frac{1}{2},-}.
\end{align}
Furthermore, we let
\begin{align}
    P^{\pm} \colon H^{\frac{1}{2}}(\mathbb{T}^3,\C^4) \to H^{\frac{1}{2},\pm}
\end{align}
denote the orthogonal projections onto the subspaces~$H^{\frac{1}{2},\pm}$. 
For any~$\psi\in H^{\frac{1}{2}}$, we can decompose it as 
\begin{align}
    \psi = P^+\psi + P^-\psi \equiv \psi^+ + \psi^- \in H^{\frac{1}{2},+}\oplus H^{\frac{1}{2},-}. 
\end{align}
Note that this is also a~$L^2$ orthogonal projection: 
\begin{align}
    \|\psi\|_{L^2}^2 = \|\psi^+\|_{L^2}^2 + \|\psi^-\|_{L^2}^2.
\end{align}

With the above projectors, we have 
\begin{align}
    \pD = \pD(P^+ + P^-) = \pD P^+ + \pD P^-, & &\mbox{ and }& &
    |\pD|= \pD P^+ - \pD P^-.
\end{align}


\section{The variational structure}\label{sect:variational str}

The equation~\eqref{eq:NDE-spatial} is the Euler-Lagrange equation of the functional
\begin{align}
    J\colon H^{\frac{1}{2}}(\T^3,\C^4)\to \R
\end{align}
given by
\begin{align}
    J(\psi)
    =\int_{\mathbb{T}^3}  \frac{1}{2}\Abracket{\psi,\pD\psi} -\frac{a}{2}|\psi|^2 - F(x,\psi)\dv.
\end{align}
Remark that, throughout the variational argument, we regard $H^{\frac12}(\mathbb{T}^3,\C^4)$ as a real Hilbert space, and all derivatives and pairings are understood in the real sense.
Recall that the nonlinearity~$F$ satisfies the hypotheses (F1-5).
In particular,~$F(x,\psi)$ fails to be coercive in~$\psi$ in general.
Actually, in the model example,~$F(\psi)=\frac{1}{2}G(\bar{\psi}\psi)$, 
using the notation from~\eqref{eq:up-low}, we have
\begin{align}
    \bar{\psi}\psi = \Abracket{\gamma^0\psi,\psi}= |\psi_{\up}|^2 - |\psi_{\low}|^2, & &
    |\psi|^2 = |\psi_{\up}|^2 + |\psi_{\low}|^2 = \sum_{k=1}^4 |\psi^k|^2.
\end{align}
Note that~$\bar{\psi}\psi$ may be small, while~$|\psi|^2$ is large.
This kind of nonlinearity frequently arises in various particle models in quantum field theory, which is more complicated than the nonlinearities of the form~$G(|\psi|^2)$.
The approach we take here is motivated by~\cite{Bartsch1999nonlinear, EstebanSere1995stationary, HoferWysocki1990first,Sere1995homoclinic}.
Aside from the strongly indefinite nature of Dirac operators, there are new challenges in the periodic setting, such as the construction of the min-max structure, and the absence of a useful radial ansatz.
Methodologically, the powerful Fourier transform on~$\R^3$, which provides rich variational information in~\cite{EstebanSere1995stationary}, is no longer available.
Although Fourier series are still available on~$\mathbb T^3$, constructing a suitable linking structure requires the detailed spectral analysis of~$\pD$. 
We aim to fix these issues and look for new variational periodic solutions.

\

Ding and Liu~\cite{DingLiu2014periodic} also worked on periodic solutions for nonlinear Dirac equations.
The nonlinearities in their consideration admit a lower bound in the form of Ambrosetti-Rabinowitz type, namely of the form~$G(|\psi|^2)$ for a suitable function~$G$.
By detecting the variational structure near~$0$ and near infinity, they succeeded in finding min-max periodic solutions.
For the nonlinearity considered here, which fails to be coercive globally, we have to employ a perturbation method to obtain the desired variational periodic solution.

We observe that, with the help of the spectral analysis in Lemma \ref{lemma:spectrum D}, the functional~$J$ admits a local linking structure as in~\cite{EstebanSere1995stationary}.
However, due to the loss of the global coercivity of the nonlinearity~$F$ in~$\psi$, we cannot verify the Palais--Smale condition for~$J$.
Fortunately we can still employ the perturbation method in~\cite{EstebanSere1995stationary} and first obtain perturbative solutions, then establish uniform estimates of such solutions, finally take the limit of a subsequence which serves as a nontrivial periodic solution.


\section{Perturbations}\label{sect:perturbation}

For each~$\eps\in [0,1]$, consider the functional
\begin{align}
    J_\eps(\psi)
    =& J(\psi)-\eps\int_{\mathbb{T}^3} |\psi|^{\alpha_2}\dv \\
    =& \int_{\mathbb{T}^3} \frac{1}{2}\Abracket{\psi,\pD\psi}-\frac{a}{2}\Abracket{\psi,\psi}- F(x,\psi)-\eps|\psi|^{\alpha_2}\dv.
\end{align}
We will abbreviate~$F(x,\psi)+\eps|\psi|^{\alpha_2}$ as~$F_{\eps}(x,\psi)$.
Note that the functions~$F_\eps$ satisfy similar hypotheses as~$F$ with possibly different constants~$A_j$.
With an abuse of notation let us assume that 
\emph{ (F1-4) are satisfied by~$F_\eps$ for any~$\eps\in [0,1]$}.
Remark that we do not require the perturbed~$F_\eps$ to satisfy (F5); condition (F5) is used only for the unperturbed nonlinearity~$F$ in the uniform-estimate argument.
The Euler-Lagrange equation of~$J_\eps$ is 
\begin{align}\label{eq:perturbative Dirac eq}
    \pD\psi -a\psi -\p_\psi F(x,\psi)=\eps \alpha_2 |\psi|^{\alpha_2-2}\psi. 
\end{align}

\subsection{Verification of the Palais--Smale condition for the perturbed functionals}
Fix~$\eps \in (0,1]$, and let~$(\psi_n)_{n\geq 1}$ be a Palais-Smale sequence for~$J_\eps$ at a level~$c$, namely 
\begin{align}\label{eq:PS-level}
    J_\eps(\psi_n)=\int_{\mathbb{T}^3}  \frac{1}{2}\langle \psi_n, \pD\psi_n\rangle - \frac{a}{2}|\psi_n|^2 - F_{\varepsilon}(x,\psi_n) \dv \to c, 
\end{align}
\begin{align}\label{eq:PS:differential}
    \dd J_\eps(\psi_n)= \pD\psi_n - a\psi_n - \p_\psi F_{\varepsilon}(x,\psi_n) \; \to \; 0, 
    \qquad \mbox{ in } H^{-\frac{1}{2}}
\end{align}
as~$n\to +\infty$. 
We need to show that there exists a subsequence converging in~$H^{\frac{1}{2}}$ to a limit~$\psi_\infty$ which is a weak solution of~\eqref{eq:perturbative Dirac eq}.

We first show that the sequence~$(\psi_n)$ is bounded in~$H^{\frac{1}{2}}$. 
To see this, we test~\eqref{eq:PS:differential} against~$\psi_n$ and take the difference with~\eqref{eq:PS-level}, then by (F3)
\begin{align}
    2J_\eps(\psi_n)-\dd J_\eps(\psi_n)[\psi_n]
    =&\int_{\mathbb{T}^3} \p_\psi F(x,\psi_n)[\psi_n]-2F(x,\psi_n)+\eps (\alpha_2-2)|\psi_n|^{\alpha_2}\dv \\
    \geq & \eps(\alpha_2-2)\int_{\mathbb{T}^3} |\psi_n|^{\alpha_2}\dv, 
\end{align}
that is, 
\begin{align}
    \int_{\mathbb{T}^3} |\psi_n|^{\alpha_2}\dv \leq 
    \frac{1}{\eps(\alpha_2-2)} \parenthesis{ 2c + o(1) + o(\|\psi_n\|_{H^{\frac{1}{2}}})}.
\end{align}
As a consequence, in the equation 
\begin{align}
    \pD \psi_n = a\psi_n + \p_\psi F(x,\psi_n) + \eps \alpha_2 |\psi_n|^{\alpha_2-2}\psi_n +\dd J_\eps(\psi_n),  
\end{align}
the right-hand side has the bound 
\begin{align}
    \| RHS\|_{H^{-\frac{1}{2}}}
    \leq & a \|\psi_n\|_{H^{-\frac{1}{2}}} +C(A_2,\alpha_2)(1+ \||\psi_n|^{\alpha_2-1}\|_{H^{-\frac{1}{2}}} )+ o(1) \\
    \leq & C(a, A_2, \eps, \alpha_2, \vol(\mathbb{T}^3))(1+ \|\psi_n\|_{L^{\alpha_2}}^{\alpha_2-1} )  + o(1) \\
    \leq&  C(a, A_2, \eps, \alpha_2, \vol(\mathbb{T}^3)) \parenthesis{1+ \frac{1}{\eps(\alpha_2-2)} \parenthesis{ 2c + o(1) + o(\|\psi_n\|_{H^{\frac{1}{2}}})}}^{\frac{\alpha_2-1}{\alpha_2}}.
\end{align}

Since~$\pD$ has no kernel, 
\begin{align}
    \|\psi_n\|_{H^{\frac{1}{2}}} \leq \|\pD \psi_n\|_{H^{-\frac{1}{2}}}.
\end{align}
It follows that
\begin{align}
    \|\psi_n\|_{H^{\frac{1}{2}}} \leq C(c,a, A_2, \eps,\alpha_2,\vol(\mathbb{T}^3)).
\end{align}
Here we note that~$\alpha_2>2$ and~$\eps>0$ are essential for this estimate, which blows up as~$\eps\to 0^+$.
That is also the reason for introducing this perturbation, otherwise we cannot be sure of the boundedness of Palais-Smale sequences for~$J$. 

\ 

Since~$H^{\frac{1}{2}}(\mathbb{T}^3,\C^4)$ is a Hilbert space, we can extract a \emph{weakly} convergent subsequence which will still be denoted by~$\psi_n$, with limit~$\psi_\infty\in H^{\frac{1}{2}}$.
It remains to show that~$\psi_\infty$ is a critical point of~$J_\eps$ and the convergence is actually strong.

By the compactness of the Sobolev embedding $H^{\frac{1}{2}}(\mathbb{T}^3)\hookrightarrow L^q(\mathbb{T}^3)$ for $q<3$ and~$\mathbb{T}^3$ closed, we have
\[
\psi_{n}\to\psi_\infty\quad\text{strongly in }L^q,\quad\forall q<3.
\]
In particular,~$\psi_n\to\psi_\infty$ in~$L^{\alpha_2}$. 
Using~$\big||z|^{\alpha_2-2}z-|w|^{\alpha_2-2}w\big| \leq C(|z|+|w|)^{\alpha_2-2}|z-w| $, we obtain
\[
|\psi_n|^{\alpha_2-2}\psi_n
\to
|\psi_\infty|^{\alpha_2-2}\psi_\infty
\quad\text{strongly in }L^{\frac{\alpha_2}{\alpha_2 -1}}.
\]
By (F2) and continuity of $\partial_\psi F$, $|\partial_\psi F(x,\psi)| \leq C(1+|\psi|^{\alpha_2-1})$. 
Since \(\psi_n\to\psi_\infty\) strongly in \(L^{\alpha_2}\), the continuity of the associated Nemytskii operator gives
\[
\partial_\psi F(\cdot,\psi_n)
\to
\partial_\psi F(\cdot,\psi_\infty)
\quad\text{strongly in }L^{\frac{\alpha_2}{\alpha_2 -1}}.
\]

For any test function $\phi\in C^\infty(\mathbb{T}^3,\C^4)$,
\[
\dd J_\eps(\psi_{n})[\phi]=\int_{\mathbb{T}^3}\Abracket{\phi,\;\pD\psi_{n}-a\psi_{n}-\p_\psi F(x,\psi_{n})-\eps\alpha_2|\psi_{n}|^{\alpha_2-2}\psi_{n}}\dv= o(\|\phi\|)\to 0.
\]
As $n\to\infty$, we have
\begin{itemize}
    \item $\displaystyle\int_{\mathbb{T}^3}\Abracket{\phi,\pD\psi_{n}}\dv\to\int_{\mathbb{T}^3}\Abracket{\phi,\pD\psi_\infty}\dv$ by weak $H^{\frac{1}{2}}$-convergence;
    \item $\displaystyle\int_{\mathbb{T}^3}\Abracket{\phi,a\psi_{n}}\dv\to\int_{\mathbb{T}^3}\Abracket{\phi,a\psi_\infty}\dv$ by weak (actually strong) $L^2$-convergence;
    \item $\displaystyle\int_{\mathbb{T}^3}\Abracket{\phi,\p_\psi F(x,\psi_{n})}\dv\to\int_{\mathbb{T}^3}\Abracket{\phi,\p_\psi F(x,\psi_\infty)}\dv$ by strong $L^{\frac{\alpha_2}{\alpha_2 -1}}$-convergence of the nonlinear potential part~$\p_\psi F(\cdot,\psi_n)$; 
    \item $\displaystyle\eps\alpha_2\int_{\mathbb{T}^3}\Abracket{\phi,|\psi_n|^{\alpha_2-2}\psi_n}\dv\to\eps\alpha_2\int_{\mathbb{T}^3}\Abracket{\phi,|\psi_{\infty}|^{\alpha_2-2}\psi_{\infty}}\dv$ by the~$L^{\frac{\alpha_2}{\alpha_2 -1}}$ convergence of perturbation term~$|\psi_n|^{\alpha_2-2}\psi_n$.
\end{itemize}
Hence, we can pass to the limit and obtain
\[
\int_{\mathbb{T}^3}\Abracket{\phi,\;\pD\psi_\infty-a\psi_\infty-\p_\psi F(x,\psi_\infty)-\eps\alpha_2|\psi_\infty|^{\alpha_2-2}\psi_\infty}\dv=0,\qquad\forall\phi\in C^\infty(\mathbb{T}^3,\C^4). 
\]
That is, $\dd J_\varepsilon(\psi_\infty)=0$.

Finally, since~$\psi_n- \psi_\infty$ converges to~$0$ weakly in~$H^{\frac{1}{2}}$ and strongly in~$L^q$ for any~$1<q<3$, we have  
\begin{align}
    \pD(\psi_n-\psi_\infty) 
    =a(\psi_n-\psi_\infty)+\p_\psi F(x,\psi_n)-\p_\psi F(x,\psi_\infty) + \eps \alpha_2 \parenthesis{|\psi_n|^{\alpha_2-2} \psi_n - |\psi_\infty|^{\alpha_2-2}\psi_\infty }  + \dd J_\eps(\psi_n) 
\end{align}
and the right hand side above converges to~$0$ in~$H^{-\frac{1}{2}}$, hence 
\begin{align}
    \|\psi_n-\psi_\infty\|_{H^{\frac{1}{2}}} \leq C\|\pD(\psi_n-\psi_\infty)\|_{H^{-\frac{1}{2}}} \to 0.
\end{align}

Therefore, the Palais-Smale condition holds for~$J_\eps$ for each~$\eps\in (0,1]$.

\subsection{Linking structures for perturbed functionals}
We now show that~$J_\eps$ admits a local linking structure in a uniform manner.
Let
\begin{align}
    \mathscr{P}= \Span_{\C}\braces{ \begin{pmatrix}1 \\ 0 \\ 0 \\ 0 \end{pmatrix}, \; \begin{pmatrix} 0\\ 1\\ 0\\ 0 \end{pmatrix} }
\end{align}
which is a vector space of complex dimension 2.
It consists of parallel spinors on~$\mathbb{T}^3$ whose lower components are zero.
The key feature is that
\begin{align}
    \gamma^0\psi=\psi, \qquad \forall \psi\in\mathscr{P}.
\end{align}
Recall the equivalent norm given in~\eqref{eq:equiv norm}.

\begin{lemma}\label{lemma:local control}
    For any~$\mu \in  (0,m)$ and any~$N\in\braces{1,2,3,4}$, there exists a real~$N$-dimensional space~$\mathscr{E}_N\subset H^{\frac{1}{2},+}$ and a positive number~$R>0$ such that for any~$\psi\in H^{\frac{1}{2}}$,
       \begin{multline}
         \parenthesis{ \tnorm{P^-\psi}_{H^{1/2}}\leq R, \; \mbox{ and }\;  P^+\psi\in\mathscr{E}_N, \, \tnorm{P^+\psi}_{H^{1/2}}=R }\\
        \quad \Longrightarrow  \quad 
        \quad \int_{\mathbb{T}^3} \frac{1}{2}\Abracket{\psi,\pD\psi}-F_\eps(x,\psi)\dv \leq \int_{\mathbb{T}^3} \frac{\mu}{2}|\psi|^2 \dv.
    \end{multline}
\end{lemma}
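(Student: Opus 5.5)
The plan is to take $\mathscr{E}_N$ inside the space $\mathscr{P}$ of constant spinors with vanishing lower components. By Lemma~\ref{lemma:spectrum D}, $\mathscr{P}=\Eigen(\pD;m)$, and it has real dimension $4$. So for $N\in\braces{1,2,3,4}$ we fix any real $N$-dimensional subspace $\mathscr{E}_N\subset\mathscr{P}\subset H^{\frac12,+}$. We then show that the required inequality holds for $R$ large, using (F4) to overcome the positive quadratic part on $\mathscr{E}_N$. The constraint $\tnorm{P^-\psi}\le R$ will not even be needed.

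\textbf{Step 1: reduce to a scalar inequality.} Write $\psi=e+w$ with $e=P^+\psi\in\mathscr{E}_N$, $\tnorm{e}=R$, and $w=P^-\psi$. Since $\pD e=me$, we have $\|e\|_{L^2}^2=R^2/m$. Moreover
\begin{align}
\int_{\T^3}\Abracket{\psi,\pD\psi}\dv=m\|e\|_{L^2}^2-\tnorm{w}^2\le m\|e\|_{L^2}^2-m\|w\|_{L^2}^2 ,
\end{align}
using that the spectral gap gives $\tnorm{w}^2\ge m\|w\|_{L^2}^2$. By the $L^2$-orthogonality of $e$ and $w$, the claim is therefore implied by
\begin{align}
\frac{m-\mu}{2}\|e\|_{L^2}^2-\frac{m+\mu}{2}\|w\|_{L^2}^2-\int_{\T^3}F_\eps(x,\psi)\dv\le 0 .
\end{align}

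\textbf{Step 2: case split.} Set $\theta=\frac{m-\mu}{m+\mu}\in(0,1)$.

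If $\|w\|_{L^2}^2\ge\theta\|e\|_{L^2}^2$, the two quadratic terms together are $\le0$. Since $F_\eps\ge0$, we are done in this case.

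Otherwise $\|w\|_{L^2}^2<\theta\|e\|_{L^2}^2$, and we bound $\int\bar\psi\psi$ from below. Expanding,
\begin{align}
\bar\psi\psi=|e|^2+2\,\mathrm{Re}\Abracket{\gamma^0e,w}+\Abracket{\gamma^0w,w}.
\end{align}
The cross term integrates to $2\,\mathrm{Re}\Abracket{e,\int_{\T^3} w\,\dv}$, because $e$ is constant and $\gamma^0e=e$. The mean $\int w$ is the $L^2$-projection of $w$ onto constant spinors. Since $w\in H^{\frac12,-}$ and the constants split as $\Eigen(\pD;m)\oplus\Eigen(\pD;-m)$ (proof of Lemma~\ref{lemma:spectrum D}), this mean lies in $\Eigen(\pD;-m)$. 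That space consists of lower-component constants, which are orthogonal to $e\in\mathscr{P}$. Hence the cross term vanishes. Together with $\Abracket{\gamma^0w,w}\ge-|w|^2$, this gives
\begin{align}
\int_{\T^3}\bar\psi\psi\,\dv\ge\|e\|_{L^2}^2-\|w\|_{L^2}^2\ge(1-\theta)\frac{R^2}{m}.
\end{align}
Using (F4) for $F_\eps$ and Jensen/Hölder with $\nu>1$, we obtain
\begin{align}
\int_{\T^3} F_\eps\,\dv\ge A_3\vol(\T^3)^{1-\nu}\Big((1-\theta)\tfrac{R^2}{m}\Big)^{\nu}-A_4\vol(\T^3).
\end{align}
Since $2\nu>2$, this exceeds $\frac{m-\mu}{2m}R^2$ once $R$ is large. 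The constants $A_3,A_4$ are uniform in $\eps$ by the standing convention, so $R$ depends only on $\mu,m,\nu$ and the $A_j$, and the inequality holds.

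The main obstacle is the cross term $\mathrm{Re}\Abracket{\gamma^0e,w}$, which a priori could make $\bar\psi\psi$ small even when $e$ is large; this is exactly the non-coercivity of the Lorentz scalar. The key point is the spectral observation that, for constant $e\in\mathscr{P}$, only the mean of $w$ interacts with $e$, and that mean sits in $\Eigen(\pD;-m)$, which is orthogonal to $\mathscr{P}$. This is why $\mathscr{E}_N$ must be chosen inside $\mathscr{P}$ rather than as an arbitrary subspace of $H^{\frac12,+}$.
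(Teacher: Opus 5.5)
Your proof is correct and follows essentially the paper's argument. You take $\mathscr{E}_N\subset\mathscr{P}$ and split into two cases: if $P^-\psi$ is large, the quadratic part alone suffices because $F_\eps\ge 0$; if it is small, then $\int\bar\psi\psi\gtrsim R^2$, and (F4) together with H\"older and $\nu>1$ wins for large $R$. The differences are cosmetic: you split on $L^2$ norms with threshold $\theta$ after absorbing $\frac{\mu}{2}\|\psi\|_{L^2}^2$, whereas the paper splits on $\tnorm{P^-\psi}^2\gtrless(1-\frac{\mu}{m})R^2$; and the cross term vanishes directly because $\gamma^0e=e\in H^{\frac12,+}$ is $L^2$-orthogonal to $w\in H^{\frac12,-}$ (the paper's $P^-\gamma^0e=0$), so your detour through the mean of $w$ is valid but unnecessary.
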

Note that~$R$ is chosen independently of~$N$ and~$\eps$.

\begin{proof}[Proof of Lemma~\ref{lemma:local control}]

Let~$\mathscr{E}_N$ be a \emph{real}~$N$-dimensional subspace of~$\mathscr{P}$, which consists of constant spinors and has zero lower components, i.e.~$\psi_{\low}=0$.
In particular, for any~$e\in\mathscr{E}_N$, we have
\begin{align}
    \pD e = \left(-i\gamma^0 \gamma^k \partial_k + m\gamma^0\right)e = \D e + m\gamma^0 e = me,
\end{align}
\begin{align}
    P^+ e = e, \quad P^-\gamma^0 e = P^- e=0,
\end{align}
and
\begin{align}
    \bar{e}e = \Abracket{\gamma^0 e, e} = \Abracket{e,e}=|e|^2=|e_{\up}|^2.
\end{align}

Thus, setting~$e=P^+\psi$ with~$\tnorm{e}_{H^{1/2}}=R$, we have
\begin{align}
    \int_{\mathbb{T}^3} \frac{1}{2}\Abracket{e,\pD e}\dv
    =\int_{\mathbb{T}^3} \frac{m}{2}|e|^2\dv
    =\frac{1}{2}\tnorm{e}_{H^{{1}/{2}}}^2
    = \frac{1}{2}R^2.
\end{align}
To obtain the desired result, we notice that~$F_\eps(\psi)\geq 0$. 
Thus if
\begin{align}
    \tnorm{P^-\psi}^2_{H^{{1}/{2}}}=\int_{\T^3}\Abracket{|\pD|P^-\psi,P^-\psi}\dv=-\int_{\mathbb{T}^3} \Abracket{P^-\psi, \pD P^-\psi}\dv
    \geq (1-\frac{\mu}{m})R^2,
\end{align}
then we immediately have
\begin{align}
    \int_{\mathbb{T}^3}\frac{1}{2}\Abracket{\psi,\pD\psi}-F_\eps(\psi)\dv
    \leq &\frac{1}{2}\int_{\mathbb{T}^3} \Abracket{P^+\psi, \pD P^+\psi} + \Abracket{P^-\psi,\pD P^-\psi} \dv \\
    \leq &\frac{\mu}{2m}R^2
    = \frac{\mu}{2}\int_{\mathbb{T}^3} |e|^2\dv
    \leq \frac{\mu}{2}\int_{\mathbb{T}^3} |\psi|^2\dv.
\end{align}
Otherwise,~$\tnorm{P^- \psi}^2_{H^{{1}/{2}}} < (1-\frac{\mu}{m})R^2$, by Lemma~\ref{lemma:spectrum D} we have
\begin{align}
    \|P^-\psi\|_{L^2}^2 \leq \frac{1}{m} \tnorm{P^-\psi}_{H^{{1}/{2}}}^2 \leq \frac{1}{m}\parenthesis{1-\frac{\mu}{m}}R^2.
\end{align}
Then, using (F4), we see that
\begin{align}
    -\int_{\mathbb{T}^3} F_\eps(x,\psi)\dv
    \leq& -\int_{\mathbb{T}^3} A_3 |\bar{\psi}\psi|^\nu -A_4 \dv
    =-A_3\int_{\mathbb{T}^3} |\bar{\psi}\psi|^\nu\dv + A_4 \cdot\vol(\mathbb{T}^3).
\end{align}
The term involving~$\bar{\psi}\psi$ can be estimated by
\begin{align}
    \parenthesis{\int_{\mathbb{T}^3} |\bar{\psi}\psi|^\nu\dv }^{\frac{1}{\nu}}\parenthesis{\int_{\mathbb{T}^3} 1 \dv}^{1-\frac{1}{\nu}}
    \geq & \int_{\T^3} \bar{\psi}\psi\dv  \\
    =& \int_{\mathbb{T}^3} \Abracket{\gamma^0\parenthesis{P^+\psi+ P^-\psi}, P^+\psi + P^-\psi}\dv \\
    =& \int_{\mathbb{T}^3} |e|^2 + \Abracket{\gamma^0 P^- \psi, P^-\psi}\dv \\
    \geq & \| e\|_{L^2}^2 - \|P^-\psi\|_{L^2}^2 \\
    \geq & \frac{1}{m} R^2 -\frac{1}{m}(1-\frac{\mu}{m})R^2\\
    \geq & \frac{\mu}{2m^2}R^2,
\end{align}
that is,
\begin{align}
    \int_{\mathbb{T}^3} |\bar{\psi}\psi|^\nu \dv
    \geq  \frac{\mu^\nu}{\vol(\mathbb{T}^3)^{\nu -1}} \parenthesis{\frac{R^2}{2m^2}}^{\nu}.
\end{align}
These are essentially the estimates in~\cite[Lemma 2.1]{EstebanSere1995stationary}, but note that on the torus it becomes more clear due to the presence of constant eigenspinors.
Now we can estimate
\begin{align}
    \int_{\mathbb{T}^3} \frac{1}{2}\Abracket{\psi,\pD \psi}- F_\eps(x,\psi) \dv
    \leq & \frac{1}{2}\tnorm{P^+\psi}^2_{H^{{1}/{2}}} - \frac{1}{2}\tnorm{P^-\psi}^2_{H^{{1}/{2}}} - \int_{\mathbb{T}^3} F_\eps(x,\psi)\dv \\
    \leq & \frac{1}{2}R^2 - \frac{A_3\mu^\nu}{\vol(\mathbb{T}^3)^{\nu-1} (2m^2)^\nu} R^{2\nu} + A_4 \vol(\mathbb{T}^3).
\end{align}
Since~$\nu>1$ and~$A_3>0$, we can choose~$R=R(m,\vol(\mathbb{T}^3),A_3, A_4,\mu,\nu)>0$ such that the RHS above is negative.
Note that this choice of~$\mathscr{E}_N$ and~$R$ is independent of~$\eps\in [0,1]$.
\end{proof}
\begin{rmk}
    Later we will apply Lemma~\ref{lemma:local control} with~$\mu=a$.
    For such a choice it suffices to have
    \begin{align}
        R\geq \max \braces{ \sqrt{2A_4\vol(\mathbb{T}^3)}, \parenthesis{\frac{2^\nu}{A_3}}^{\frac{1}{2(\nu-1)}}\sqrt{\vol(\mathbb{T}^3)} \parenthesis{\frac{m}{\sqrt{a}}}^{\frac{\nu}{\nu-1}} }.
    \end{align}
    We see that~$R$ can be small if~$\vol(\mathbb{T}^3)$ is small. 
    Moreover, the closer~$a$ is to~$m$, the smaller this upper bound can be. 
\end{rmk}

Next we show that~$J_\eps$ admits a linking structure. 
We consider the case~$N=1$. 
In this case, let~$\mathscr{E}_1=\Span(e)$ with~$\tnorm{e}_{H^{{1}/{2}}}=\sqrt{m}\|e\|_{L^2}=1$, and let 
\begin{align}
    \mathscr{C}_1(R)
    \coloneqq \braces{\psi= \psi^- + \lambda e\in H^{\frac{1}{2}}(\mathbb{T}^3,\C^4) \mid  \psi^-\in H^{\frac{1}{2},-}, \; \tnorm{\psi^-}_{H^{1/2}}\leq R, \; \lambda \in [0,R]}. 
\end{align}
For~$r\in (0,R)$, consider the sphere in~$H^{\frac{1}{2},+}$ with radius~$r$:
\begin{align}
    \mathscr{S}^+(r)\coloneqq \braces{\psi\in H^{\frac{1}{2},+} \; \mid \; \tnorm{\psi}_{H^{{1}/{2}}}=r }.
\end{align}
Note that both of these sets are infinite-dimensional, hence we cannot claim that they link in the usual sense of~\cite{Ambrosetti2007Nonlinear,Benci1979Critical}. 
Nevertheless, we will see that they separate the functional~$J_\eps$ in the sense of~\cite{Schechter2021linking}. 

\begin{lemma}\label{lemma:negativity on boundary-1}
    For any~$\eps\in [0,1]$, we have~$J_\eps|_{\p\mathscr{C}_1(R)}\leq 0$.
\end{lemma}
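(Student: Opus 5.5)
The boundary of $\mathscr{C}_1(R)$, viewed inside the affine-like set $H^{\frac12,-}\oplus\R_{\ge0} e$, splits into three pieces. The first is the bottom face $\{\lambda=0\}$, i.e.\ $\psi=\psi^-$ with $\tnorm{\psi^-}_{H^{1/2}}\le R$. The second is the top face $\{\lambda=R\}$ with $\tnorm{\psi^-}_{H^{1/2}}\le R$. The third is the lateral face $\{\tnorm{\psi^-}_{H^{1/2}}=R,\ \lambda\in[0,R]\}$. I will treat each separately, writing
\begin{align}
J_\eps(\psi)=\frac12\tnorm{P^+\psi}^2_{H^{1/2}}-\frac12\tnorm{P^-\psi}^2_{H^{1/2}}-\frac a2\|\psi\|_{L^2}^2-\int_{\T^3}F_\eps(x,\psi)\dv .
\end{align}
This uses the $L^2$- and $\pD$-orthogonality of $P^\pm$ and the identity $\pD e=me$.

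On the bottom face, $P^+\psi=0$. All remaining terms are nonpositive because $F_\eps\ge0$ by (F1) and $a>0$. Hence $J_\eps(\psi^-)\le0$ with no restriction on the norm.

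On the top face, $P^+\psi=Re\in\mathscr{E}_1$ with $\tnorm{Re}_{H^{1/2}}=R$ and $\tnorm{P^-\psi}_{H^{1/2}}\le R$. These are exactly the hypotheses of Lemma~\ref{lemma:local control}, applied with $N=1$, $\mu=a$, the same $\mathscr{E}_1$ and the radius $R$ chosen there; the independence of $R$ from $\eps$ is what gives uniformity in $\eps$. The lemma yields $\int\frac12\Abracket{\psi,\pD\psi}-F_\eps\le\frac a2\int|\psi|^2$, which is precisely $J_\eps(\psi)\le0$.

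On the lateral face, the negative part is large compared with the positive part, since $\lambda\le R=\tnorm{\psi^-}_{H^{1/2}}$. Dropping the $-\frac a2\|\psi\|^2_{L^2}$ and $-F_\eps$ terms gives
\begin{align}
J_\eps(\psi)\le\frac12\lambda^2-\frac12R^2\le0 .
\end{align}

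The only step needing care is that the construction in Lemma~\ref{lemma:local control} actually matches the normalization of $\mathscr{C}_1(R)$: one needs $\mathscr{E}_1\subset\mathscr{P}$, so that $\pD e=me$ and $P^+e=e$, and one must use the same $R$ (taken as in the Remark after Lemma~\ref{lemma:local control} with $\mu=a$). Once this is fixed the argument is routine, and all three estimates hold for every $\eps\in[0,1]$.
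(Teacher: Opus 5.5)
Your proposal is correct and matches the paper's proof: the same three-face decomposition of $\p\mathscr{C}_1(R)$. On the lateral face you use the bound $\frac12(\lambda^2-R^2)\le 0$, on the bottom face the fact that every remaining term is nonpositive, and on the top face Lemma~\ref{lemma:local control} with $\mu=a$. Your remarks that the boundary is taken relative to $H^{\frac12,-}\oplus\R e$, and that $e$ must lie in $\mathscr{P}$ with the same $\eps$-independent $R$, are exactly the conventions the paper uses implicitly.
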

\begin{lemma}\label{lemma:positivity in interior-1}
    There exists~$r\in (0,R)$ and~$C_*>0$ such that for any~$\eps\in [0,1]$, we have $J_\eps|_{\mathscr{S}^+(r)}\geq C_*$. 
\end{lemma}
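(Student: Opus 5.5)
On $\mathscr{S}^+(r)$ every $\psi$ satisfies $\psi=P^+\psi$. The plan is to compare the quadratic part with the nonlinear part using the spectral gap from Lemma~\ref{lemma:spectrum D} and the growth bound (F1), in the usual mountain-pass fashion near the origin.

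First the quadratic part. For $\psi\in H^{\frac12,+}$ we have $\int\Abracket{\psi,\pD\psi}\dv=\tnorm{\psi}^2_{H^{1/2}}$. The smallest positive eigenvalue of $\pD$ is $m$, so $\|\psi\|_{L^2}^2\le \frac1m\tnorm{\psi}^2_{H^{1/2}}$. Hence
\begin{align}
\int_{\mathbb{T}^3}\frac12\Abracket{\psi,\pD\psi}-\frac a2|\psi|^2\dv\ \ge\ \frac12\Bigl(1-\frac am\Bigr)\tnorm{\psi}^2_{H^{1/2}}=\frac12\Bigl(1-\frac am\Bigr)r^2 .
\end{align}
This is positive because $a<m$. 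That is exactly why the positive spectral subspace has to be taken with respect to $\pD$, not $\D$.

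Next the nonlinear part. By the standing convention, (F1) holds for $F_\eps$ uniformly in $\eps\in[0,1]$ (concretely, $F_\eps\le (A_1+1)(|\psi|^{\alpha_1}+|\psi|^{\alpha_2})$). Since $2<\alpha_1\le\alpha_2<3$, the Sobolev embedding $H^{\frac12}(\mathbb{T}^3)\hookrightarrow L^3\hookrightarrow L^{\alpha_j}$ applies. Its constant $S$ depends only on the torus, and the norm $\tnorm{\cdot}$ is equivalent to the standard one. This gives
\begin{align}
\int_{\mathbb{T}^3}F_\eps(x,\psi)\dv\le C\bigl(r^{\alpha_1}+r^{\alpha_2}\bigr),
\end{align}
with $C$ depending on $A_1$, $S$ and $\vol(\mathbb{T}^3)$ but not on $\eps$. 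Therefore
\begin{align}
J_\eps(\psi)\ \ge\ \frac12\Bigl(1-\frac am\Bigr)r^2-C\bigl(r^{\alpha_1}+r^{\alpha_2}\bigr)
\end{align}
on $\mathscr{S}^+(r)$.

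Because $\alpha_1,\alpha_2>2$, we can fix $r\in(0,R)$ so small that $C(r^{\alpha_1}+r^{\alpha_2})\le\frac14(1-\frac am)r^2$. Then we set $C_*=\frac14(1-\frac am)r^2>0$. Shrinking $r$ below the $R$ of Lemma~\ref{lemma:local control} does not affect that lemma, since $R$ was fixed independently.

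There is no serious obstacle. The one point needing care is that all constants are uniform in $\eps$. This holds because the only $\eps$-dependence is through $\eps|\psi|^{\alpha_2}$, and $\eps\le1$ lets it be absorbed into the $r^{\alpha_2}$ term. One should also note that $C_*$ and $r$ depend on $m-a$; this is harmless here, since $a$ is fixed.
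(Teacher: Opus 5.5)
Your proposal is correct and follows essentially the same route as the paper. Both arguments combine the spectral-gap bound $\frac12\bigl(1-\frac am\bigr)\tnorm{\psi}^2_{H^{1/2}}$ on $H^{\frac12,+}$ with (F1) and the Sobolev embedding into $L^{\alpha_j}$, then take $r$ small since $\alpha_1,\alpha_2>2$; your explicit bound $F_\eps\le (A_1+1)(|\psi|^{\alpha_1}+|\psi|^{\alpha_2})$ simply spells out the uniformity in $\eps$ that the paper obtains through its standing convention on $F_\eps$.
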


\begin{proof}[Proof of Lemma~\ref{lemma:negativity on boundary-1}]
    Let~$\psi\in\p\mathscr{C}_1(R)$, namely, either~$\tnorm{\psi^-}_{H^{1/2}}=R$ or~$\lambda\in \braces{0,R}$.
    \begin{itemize}
        \item If~$\tnorm{\psi^-}_{H^{1/2}}=R$, then for any~$\lambda\in[0,R]$, we have
            \begin{align}
                J_\eps(\psi^-+\lambda e)\leq \frac{1}{2}(\lambda^2 - R^2)\leq 0.
            \end{align}
        \item If~$\lambda=0$, then~$J_\eps(\psi^-)\leq 0$. 
        \item If~$\lambda=R$, then~$\tnorm{\psi^-}_{H^{1/2}}\leq R$ and~$\tnorm{\psi^+}_{H^{1/2}}= R\tnorm{e}_{H^{1/2}}=R$; by Lemma~\ref{lemma:local control} (with~$\mu=a$) we again have~$J_\eps(\psi)\leq 0$. 
    \end{itemize}
\end{proof}

\begin{proof}[Proof of Lemma~\ref{lemma:positivity in interior-1}]
    By (F1), for~$\psi^+\in H^{\frac{1}{2},+}$ we have 
    \begin{align}
        J_\eps(\psi^+)
        \geq & \frac{1}{2}\int_{\mathbb{T}^3} \Abracket{\psi^+, \pD\psi^+} - a|\psi^+|^2 \dv 
        - A_1\int_{\mathbb{T}^3} |\psi^+|^{\alpha_1} + |\psi^+|^{\alpha_2}\dv \\
        \geq & \frac{1}{2}\parenthesis{1-\frac{a}{m}}\tnorm{\psi^+}^2_{H^{1/2}} - C \parenthesis{ \tnorm{\psi^+}^{\alpha_1}_{H^{1/2}}+\tnorm{\psi^+}^{\alpha_2}_{H^{1/2}}}.
    \end{align}
    Since~$a<m$ and~$2<\alpha_1\leq\alpha_2<3$ by assumption, we can choose~$r$ so small that the quadratic part dominates the superquadratic part, hence the conclusion follows.
\end{proof}

We note that~$r$ and~$C_*$ can also be chosen to be independent of~$\eps\in [0,1]$.
Furthermore, 
\begin{align}\label{eq:intersection}
    \mathscr{C}_1(R)\cap \mathscr{S}^+(r)= \braces{ r e} \neq\emptyset. 
\end{align}
It follows that 
\begin{align}
    \sup_{\p \mathscr{C}_1(R)} J_\eps =  0<C_*\leq  \inf_{\mathscr{S}^+(r)} J_\eps \leq\sup_{\mathscr{C}_1(R)} J_\eps \equiv C^*<+\infty
\end{align}
for each~$\eps\in [0,1]$. 
That is,~$\p\mathscr{C}_1(R)$ and~$\mathscr{S}^+(r)$ separate the functionals~$J_\eps$.
We will later make an explicit estimate of~$C^*$.

\subsection{Deformations and minimax level}

Next we need a suitable class of deformations. 
Note that we cannot directly use the gradient flow as in~\cite{EstebanSere1995stationary}.
Indeed, on the space~$H^{\frac{1}{2}}(\mathbb{T}^3,\C^4)$ we have the norm~$\tnorm{\cdot}_{H^{1/2}}$ and the inner product 
\begin{align}
    \ll\psi,\varphi\gg \coloneqq \int_{\mathbb{T}^3} \Abracket{|\pD|^{\frac{1}{2}}\psi,|\pD|^{\frac{1}{2}}\varphi}\dv
    =\int_{\mathbb{T}^3} \Abracket{|\pD|\psi,\varphi}\dv.
\end{align}
With respect to this inner product structure the gradient of~$J_\eps$ has the form 
\begin{align}
    \nabla J_\eps (\psi)
    = &|\pD|^{-1}\parenthesis{\pD\psi - a\psi-\p_\psi F(\cdot,\psi) - \eps \alpha_2 |\psi|^{\alpha_2 -2 }\psi} \\
    =& (P^+ - P^-)\psi - a |\pD|^{-1}\psi - |\pD|^{-1} \p_\psi F(\cdot,\psi) - \alpha_2 \eps |\pD|^{-1}\parenthesis{ |\psi|^{\alpha_2 -2}\psi}.
\end{align}
We have seen that the term~$|\pD|^{-1} \p_\psi F(\cdot,\psi)$ is only H\"older continuous but in general not Lipschitz. 
Thus the gradient flow may not exist globally.

Note that the maps~$K_\eps\colon H^{\frac{1}{2}} \to H^{\frac{1}{2}}$ defined by
\begin{align}
    K_\eps(\psi) \equiv a|\pD|^{-1}\psi + |\pD|^{-1} \p_\psi F(\cdot,\psi) + \alpha_2 \eps |\pD|^{-1}\parenthesis{ |\psi|^{\alpha_2 -2}\psi}
\end{align}
are compact and continuous.
Hence we can use certain pseudo-gradient vectors and pseudo-gradient flows as a replacement. 
In the following part of this section,~$\eps\in (0,1]$ will be fixed. 

\begin{lemma}\label{lemma:global compact approx}
    For any~$\delta>0$, there exists a locally Lipschitz map
    \begin{align}
        K_{\eps,\delta}\colon H^{\frac{1}{2}} \to H^{\frac{1}{2}}
    \end{align}
    such that 
    \begin{itemize}
        \item[(K1)] for each~$R>0$, $K_{\eps,\delta}(\mathscr{B}(R))$ is bounded and contained in a finite-dimensional subspace of~$H^{\frac{1}{2}}$ (here~$\mathscr{B}(R)$ is the ball of radius~$R$ in~$H^{\frac{1}{2}}$); 
        \item[(K2)] $K_{\eps,\delta}$ is~$\delta$-close to~$K_\eps$ in the sense that 
                    \begin{align}
                        \sup_{\psi\in H^{\frac{1}{2}}} \tnorm{ K_{\eps,\delta}(\psi)- K_{\eps}(\psi) }_{H^{1/2}} \leq \delta. 
                    \end{align}
    \end{itemize}
\end{lemma}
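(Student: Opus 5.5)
The plan is the standard finite-dimensional approximation of a compact map, made locally Lipschitz through a partition of unity, in the spirit of the Schauder projection construction. The key input is that $K_\eps$ is compact and continuous: it maps bounded sets of $H^{\frac12}$ into relatively compact sets. This follows from the compact embedding $H^{\frac12}\hookrightarrow L^q$ for $q<3$, the growth bounds in (F2) (which make $\p_\psi F(\cdot,\psi)$ and $|\psi|^{\alpha_2-2}\psi$ continuous from $L^{\alpha_2}$ into $L^{\alpha_2/(\alpha_2-1)}\hookrightarrow H^{-\frac12}$), and the boundedness of $|\pD|^{-1}\colon H^{-\frac12}\to H^{\frac12}$.

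The construction goes as follows.

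\emph{Step 1: finite-dimensional approximants of $K_\eps$.} For each $k\in\mathbb N$, let $Q_k$ be the orthogonal projection onto the span of the eigenspinors of $\pD$ with $|\lambda|\le \Lambda_k$, where $\Lambda_k\uparrow\infty$. Since $\overline{K_\eps(\mathscr B(k))}$ is compact, the projections $Q_k$ converge to the identity uniformly on it. So we may choose $\Lambda_k$ with
\begin{align}
\sup_{\psi\in\mathscr B(k)}\tnorm{Q_kK_\eps(\psi)-K_\eps(\psi)}_{H^{1/2}}\le \delta/2 .
\end{align}
Each $Q_kK_\eps$ is continuous, bounded on bounded sets, and takes values in a finite-dimensional space, but it need not be Lipschitz.

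\emph{Step 2: local Lipschitz regularization.} By continuity of $K_\eps$, every $\psi_0$ has an open neighborhood $U_{\psi_0}$, contained in a ball of radius $1$, on which $\tnorm{K_\eps(\psi)-K_\eps(\psi_0)}_{H^{1/2}}<\delta/2$. The metric space $H^{\frac12}$ is paracompact, so we take a locally finite refinement $\{U_j\}$ with chosen centers $\psi_j$ and a locally Lipschitz partition of unity $\{\rho_j\}$ subordinate to it, for example $\rho_j=\dist(\cdot,U_j^c)/\sum_i\dist(\cdot,U_i^c)$. Define
\begin{align}
K_{\eps,\delta}(\psi)=\sum_j\rho_j(\psi)\,Q_{k(j)}K_\eps(\psi_j),
\end{align}
where $k(j)$ is the smallest integer with $U_j\subset\mathscr B(k(j))$. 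The sum is locally finite with locally Lipschitz coefficients, so $K_{\eps,\delta}$ is locally Lipschitz. For (K2), at each $\psi$ only indices with $\psi\in U_j$ contribute, and for these
\begin{align}
\tnorm{Q_{k(j)}K_\eps(\psi_j)-K_\eps(\psi)}_{H^{1/2}}\le \tnorm{Q_{k(j)}K_\eps(\psi_j)-K_\eps(\psi_j)}_{H^{1/2}}+\tnorm{K_\eps(\psi_j)-K_\eps(\psi)}_{H^{1/2}}<\delta .
\end{align}
Taking the convex combination gives (K2).

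\emph{Step 3: property (K1).} Boundedness on $\mathscr B(R)$ is immediate from (K2) and the boundedness of $K_\eps$ on bounded sets. The obstacle I expect is finite-dimensionality of the range on $\mathscr B(R)$, since infinitely many $U_j$ may meet $\mathscr B(R)$. The role of the diameter bound and of the index $k(j)$ is exactly to handle this. Any $U_j$ meeting $\mathscr B(R)$ lies in $\mathscr B(R+2)$, so $k(j)\le \lceil R+2\rceil$. Hence every contributing vector lies in the range of $Q_{\lceil R+2\rceil}$, which is finite-dimensional because the eigenvalues of $\pD$ have finite multiplicity and accumulate only at $\pm\infty$ (Lemma~\ref{lemma:spectrum D}). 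If this bookkeeping proves awkward, the fallback is to choose a single projection $Q_{k}$ on each annulus $\mathscr B(k+1)\setminus\mathscr B(k-1)$ and glue with a Lipschitz partition in $\tnorm{\psi}_{H^{1/2}}$. The same argument then applies.
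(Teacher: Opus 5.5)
Your proof is correct up to one indexing slip (noted at the end), but it takes a different route from the paper.

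\textbf{The paper's route.} The paper works ball by ball. For each $n$ it takes a finite $\frac{\delta}{4}$-net $y_{n,1},\dots,y_{n,N_n}$ of the compact set $\overline{K_\eps(\mathscr{B}(n+2))}$ and pulls it back to a \emph{finite} open cover of $\mathscr{B}(n+2)$. From a Lipschitz partition of unity of that finite cover it builds $K_{\eps,n}$ with values in $\Span\{y_{n,j}\}$. It then glues the $K_{\eps,n}$ with a radial partition $\chi_n(\tnorm{\psi}_{H^{1/2}})$. Property (K1) is then immediate, because only finitely many $n$ are active on a given ball. Your ``fallback'' is essentially this construction.

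\textbf{Your route.} You get finite-dimensionality from the spectral truncations $Q_k$, which uses that $\pD$ on $\mathbb{T}^3$ has discrete spectrum with finite multiplicities. You use a single locally finite cover of all of $H^{\frac12}$, via Stone's theorem that metric spaces are paracompact. You control (K1) through the diameter bound on the cover together with the index $k(j)$.

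\textbf{What each buys.} The paper's argument uses only compactness and continuity of $K_\eps$ and finite covers. It therefore applies verbatim to any compact continuous map on a Banach space and avoids paracompactness. Yours uses more structure, but its values lie in spectral subspaces of $\pD$, which are invariant under $P^{\pm}$. You also justify the compactness of $K_\eps$, which the paper only asserts.

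\textbf{The slip.} In (K2) you apply the Step~1 bound at the center $\psi_j$. That bound is only available for $\psi_j\in\mathscr{B}(k(j))$. However, $k(j)$ is chosen from $U_j$ alone, and the refinement center $\psi_j$ need not lie in $U_j$. The fix is to define $k(j)$ as the smallest integer with $U_{\psi_j}\subset\mathscr{B}(k(j))$. The set $U_{\psi_j}$ contains both $U_j$ and $\psi_j$ and has diameter at most $2$. If $U_j$ meets $\mathscr{B}(R)$, so does $U_{\psi_j}$, hence $U_{\psi_j}\subset\mathscr{B}(R+2)$. Your (K1) bound $k(j)\le\lceil R+2\rceil$ is therefore unchanged.
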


\begin{proof}
    This is a standard locally Lipschitz approximation of the compact operator~$K_{\eps}$, see e.g.~\cite{Wojciech1998Generalized}. 
    We give a proof for completeness. 

    For each~$n\geq 1$, the set~$\overline{K_\eps (\mathscr{B}(n+2))}$ is compact, hence admits a finite~$\frac{\delta}{4}$-net, say~$y_{n,1},y_{n,2},\dots, y_{n, N_n}$.
    As~$K_\eps$ is continuous, we can take a finite open cover of~$\mathscr{B}(n+2)$ such that on each open set in that cover,~$K_\eps$ differs by at most~$\frac{\delta}{4}$ from one of the points~$y_{n,j}$.
    Then a locally Lipschitz partition of unity subordinate to this finite cover gives a locally Lipschitz map 
    \begin{align}
        K_{\eps,n}\colon \mathscr{B}(n+2) \to \Span\braces{y_{n,1},\cdots, y_{n,N_n}}
    \end{align}
    with~$\tnorm{K_\eps(\psi)-K_{\eps,n}(\psi)}_{H^{1/2}}<\delta/2$ for any~$\psi\in\mathscr{B}(n+2)$.

    Next we patch these~$K_{\eps,n}$ together to get a global approximation. 
    Choose a locally finite Lipschitz partition of unity~$\braces{\chi_n}_{n\geq 1}$ on~$[0,+\infty)$ such that~$\sum_{n\geq 1} \chi_n\equiv 1$. 
    We may assume~$\spt(\chi_n)\subset (n-1,n+1)$ for~$n\geq 2$ and~$\spt(\chi_1)\subset [0,2)$ and~$\chi_1\equiv 1$ on~$[0,1]$. 
    Then we can simply take 
    \begin{align}
        K_{\eps,\delta} (\psi) = \sum_{n=1}^{+\infty} \chi_n(\tnorm{\psi}_{H^{1/2}}) K_{\eps,n}(\psi).
    \end{align}
    This vector field~$K_{\eps,\delta}$ is locally Lipschitz since the sum is locally finite and each summand is locally Lipschitz. 
    It remains to verify (K1) and (K2). 

    For (K1): if~$\tnorm{\psi}_{H^{1/2}}\leq R < \lfloor R \rfloor+1$, then~$\chi_n(\tnorm{\psi}_{H^{1/2}})=0$ unless~$n\leq \lfloor R\rfloor+2$. Thus~$K_{\eps,\delta}(\mathscr{B}(R))$ is bounded because only finitely many~$y_{k,j}$'s enter~$\mathscr{B}(R)$, and is contained in the subspace
    \begin{align}
        \Span\braces{ y_{k,j}\mid  1\leq j\leq N_k,\; 1\leq k\leq \lfloor R\rfloor +2}
    \end{align}
    which is finite-dimensional.
    In particular,~$K_{\eps,\delta}$ is compact on bounded sets.

    For (K2): for any~$\psi\in H^{\frac{1}{2}}$, if~$\chi_n(\tnorm{\psi})\neq 0$, then~$\tnorm{\psi}_{H^{1/2}}\leq n+1 <n+2$ and hence
    \begin{align}
        \tnorm{ K_{\eps,n}(\psi)-K_{\eps}(\psi)}_{H^{1/2}}<\delta/2.
    \end{align}
    It follows that 
    \begin{align}
        \tnorm{K_{\eps,\delta}(\psi)-K_{\eps}(\psi)}_{H^{1/2}}
        \leq \sum_{n} \chi_n(\tnorm{\psi}_{H^{1/2}})\tnorm{ K_{\eps,n}(\psi)-K_\eps(\psi)}_{H^{1/2}} <\delta.
    \end{align}
\end{proof}

Later we will use the vector field~$\psi\mapsto P^+\psi-P^-\psi -K_{\eps,\delta}(\psi)$ as a replacement of~$\nabla J_\eps$, to generate a suitable deformation.
Before that we consider some general deformations.

\

A vector field~$W$ on~$H^{\frac{1}{2}}(\mathbb{T}^3,\C^4)$ is called an \emph{admissible pseudo-gradient field} if it has the form 
\begin{align}
    W(\psi)= \eta(\psi) \parenthesis{ (P^+ -P^- )\psi - \widetilde{K}(\psi)},
\end{align}
where
\begin{itemize}
    \item[(W1)] $\eta\colon H^{\frac{1}{2}}\to [0,1]$ is a locally Lipschitz function,
    \item[(W2)] $\widetilde{K}\colon H^{\frac{1}{2}}\to H^{\frac{1}{2}}$ is locally Lipschitz and is compact on bounded sets;
    \item[(W3)] $W|_{\p\mathscr{C}_1(R)} = 0$;
    \item[(W4)] On the region~$\braces{\psi\colon \eta(\psi)>0}$, there holds~$\dd J_\eps(\psi)[P^+\psi -P^- \psi -\widetilde{K}(\psi)]\geq 0$;
    \item[(W5)] $\tnorm{W(\psi)}_{H^{1/2}} \leq 1$ for any~$\psi\in H^{\frac{1}{2}}$. 
\end{itemize}
This notion is slightly different from the classical one as in~\cite{Ambrosetti2007Nonlinear}. 
In particular, we require it to be uniformly bounded but not necessarily comparable to the genuine gradient. 

Being bounded and locally Lipschitz, the vector field~$-W$ generates a global flow~$\phi^W_t\colon H^{\frac{1}{2}}\to H^{\frac{1}{2}}$: 
\begin{align}
    \begin{cases}
        \frac{\p\phi^W_t(\psi)}{\p t} = - W(\phi^W_t(\psi)), & \quad \forall \psi \in H^{\frac{1}{2}}, \; \forall t\geq 0, \\
        \phi^W_0(\psi)=\psi, & \quad \forall \psi\in H^{\frac{1}{2}}.
    \end{cases}
\end{align}
The property (W3) guarantees that~$\p\mathscr{C}_1(R)$ is fixed along the flow, and (W4) implies that the functional~$J_\eps$ does not increase along the flow. 
For this reason we call this flow~$(\phi^W_t)$ an admissible pseudo-gradient flow.
For each~$t\geq 0$, we call~$\phi^W_t$ a time-$t$ map of the admissible pseudo-gradient flow. 

Note that (W5) implies that 
\begin{align}
    \tnorm{\phi^W_t(\psi)-\psi}_{H^{1/2}}\leq \int_0^{|t|} \tnorm{W(\phi^W_s(\psi))}_{H^{1/2}}\dd{s} \leq |t|.
\end{align}
Thus, there is no blowup in finite time, and bounded sets are flowed into bounded sets in finite time. 

Fix a~$\psi\in H^{\frac{1}{2}}$ and consider a single flow line~$\phi^W_t(\psi)$.
Taking the negative projection of the flow equation, we have  
\begin{align}
    P^-\parenthesis{\frac{\p \phi^W_t(\psi)}{\p t}} 
    = \eta(\phi^W_t(\psi))P^-\phi^W_t(\psi) 
    + \eta(\phi^W_t(\psi))P^-\widetilde{K}(\phi^W_t(\psi)).
\end{align}
It follows that 
\begin{align}
    P^-\phi^W_t(\psi)
    = e^{u^W(t,\psi)}P^-\psi + C^W(t,\psi)
\end{align}
with 
\begin{align}
    u^W(t,\psi)=\int_0^t \eta(\phi^W_s(\psi))\dd s, \qquad  0\leq u^W(t,\psi)\leq t ,
\end{align}
\begin{align}
    C^W(t,\psi)
    = \int_0^t e^{\int_s^t \eta(\phi^W_\tau(\psi))\dd \tau } \eta(\phi^W_s(\psi))P^-\widetilde{K}(\phi^W_s(\psi))\dd{s}. 
\end{align}
We see that for any bounded set~$B\subset H^{\frac{1}{2}}$ and any bounded time interval~$[0,T]$, the map 
\begin{align}
    C^W\colon [0,T]\times B \to H^{\frac{1}{2},-}
\end{align}
is continuous and compact. 

\

Let~$\Gamma$ be the collection of finite compositions of time maps of admissible pseudo-gradient flows:
\begin{align}
    \Gamma\coloneqq \braces{ \phi^{W_k}_{t_k}\circ \cdots\circ \phi^{W_1}_{t_1} \mid W_j \mbox{ is admissible }, t_j\geq 0, \; k\in\mathbb{N} }. 
\end{align}
Putting~$t_j=0$ we see that~$\id\in\Gamma\neq \emptyset$.
Moreover,~$\Gamma$ is closed under composition (so it is a semigroup), and any element~$\tilde{\phi}$ is homotopic to the identity map, say via~$\Phi(t)$:~$\Phi(0)=\id$,~$\Phi(1)=\tilde{\phi}$, and for~$t\in [0,1]$,
\begin{itemize}
    \item[(1)] $\Phi(t)$ fixes~$\p\mathscr{C}_1(R)$ pointwise; 
    \item[(2)] $P^-\Phi(t,\psi)=e^{u(t,\psi)}P^-\psi+ C_\Phi(t,\psi)$ where~$u$ is continuous and bounded, and~$C_\Phi$ is continuous and compact.
\end{itemize}

\begin{lemma}\label{lemma:intersection}
    For any~$\tilde{\phi}\in \Gamma$ and~$\Phi(t)$ as above,
    \begin{align}
        \Phi(t,\mathscr{C}_1(R))\cap \mathscr{S}^+(r)\neq \emptyset,\qquad \forall t\in [0,1].
    \end{align}
\end{lemma}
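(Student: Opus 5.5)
This is the standard Benci--Rabinowitz type linking argument, reduced to a Leray--Schauder degree computation. I would transport the problem to $\mathscr{C}_1(R)$ itself, viewed as a product of a ball in $H^{\frac12,-}$ and an interval in $\R e$.

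For $\psi=\psi^-+\lambda e\in\mathscr{C}_1(R)$, a point $\Phi(t,\psi)$ lies in $\mathscr{S}^+(r)$ if and only if
\begin{align}
    P^-\Phi(t,\psi)=0, \qquad \tnorm{P^+\Phi(t,\psi)}_{H^{1/2}}=r .
\end{align}
Define $G_t\colon \mathscr{C}_1(R)\to H^{\frac12,-}\oplus\R$ by
\begin{align}
    G_t(\psi^-+\lambda e)=\parenthesis{ P^-\Phi(t,\psi),\; \tnorm{P^+\Phi(t,\psi)}_{H^{1/2}} - r }.
\end{align}
We must show that $G_t$ has a zero for every $t$.

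Using property (2), $P^-\Phi(t,\psi)=e^{u(t,\psi)}P^-\psi+C_\Phi(t,\psi)$ with $u$ bounded and continuous. Multiplying the first component by $e^{-u(t,\psi)}>0$ does not change the zero set. This gives the equivalent map
\begin{align}
    \widetilde G_t(\psi)=\parenthesis{ \psi^- + e^{-u(t,\psi)}C_\Phi(t,\psi),\; \tnorm{P^+\Phi(t,\psi)}_{H^{1/2}}-r }.
\end{align}
Identify $\mathscr{C}_1(R)$ with $\overline{B^-_R}\times[0,R]\subset H^{\frac12,-}\oplus\R$ via $(\psi^-,\lambda)$. Then $\widetilde G_t=\id-\mathcal{K}_t$, where
\begin{align}
    \mathcal{K}_t(\psi^-,\lambda)=\parenthesis{ -e^{-u}C_\Phi,\; \lambda + r - \tnorm{P^+\Phi(t,\psi)}_{H^{1/2}} }.
\end{align}
The first component of $\mathcal{K}_t$ is compact, being a bounded continuous scalar times a compact map. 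The second component is one-dimensional and bounded on bounded sets, since $\Phi$ maps bounded sets to bounded sets (by (W5), with finite total time). So $\mathcal{K}_t$ is a continuous compact perturbation of the identity, jointly in $t\in[0,1]$, and the Leray--Schauder degree $\deg(\id-\mathcal{K}_t,\operatorname{int}\mathscr{C}_1(R),0)$ is defined provided there are no zeros on the boundary.

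For the boundary, by (1) we have $\Phi(t,\psi)=\psi$ for every $\psi\in\p\mathscr{C}_1(R)$. Suppose $\psi$ is such a boundary point and $\Phi(t,\psi)=\psi\in\mathscr{S}^+(r)$. Then $\psi^-=0$ and $\lambda=r\in(0,R)$, so $\psi=re$ lies in the interior, which is a contradiction. Hence $0\notin\widetilde G_t(\p\mathscr{C}_1(R))$ for all $t$. Homotopy invariance then gives $\deg(\widetilde G_t)=\deg(\widetilde G_0)$.

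At $t=0$ we have $\Phi(0)=\id$, so $u=0$ and $C_\Phi=0$ (we may take $u(0,\cdot)=0$, $C_\Phi(0,\cdot)=0$ from the flow formulas). Thus $\widetilde G_0(\psi^-,\lambda)=(\psi^-,\lambda-r)$ is an affine homeomorphism with the unique zero $(0,r)$ in the interior. Its degree is $1$, so $\widetilde G_t$ has a zero for every $t$, which is exactly the claimed intersection by \eqref{eq:intersection}.

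The main obstacle is rigor in setting up the degree. Property (2) is stated for the composition $\Phi$, so one must check that compositions of finitely many flow maps preserve the form $e^{u}P^-\psi+C$ with $C$ compact and $(t,\psi)$-continuous. I would verify this by induction on the number of flows: composing $P^-\phi^{W}_s$ with a map of the same form gives an exponent that adds up, and a remainder which is a compact map composed with a continuous bounded map, hence still compact. One also needs the homotopy $\Phi(t)$ to be constructed by running the flows sequentially with rescaled times. I would reparametrize $t\in[0,1]$ across the $k$ pieces so that continuity in $t$ holds.
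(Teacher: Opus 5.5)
Your proof is correct, but it takes a different route from the paper. The paper never uses an infinite-dimensional degree. It restricts to the finite-dimensional cylinders $\mathscr{C}_{1,n}(R)=\mathscr{C}_1(R)\cap(H^{\frac12,-}_n\oplus\R e)$ and applies the Brouwer degree to $\psi\mapsto(P^-_n\Phi(t,\psi),\tnorm{P^+\Phi(t,\psi)}_{H^{1/2}})$. This yields points $\psi_n$ with $P^-_n\Phi(t,\psi_n)=0$ and $\tnorm{P^+\Phi(t,\psi_n)}_{H^{1/2}}=r$. It then passes to the limit $n\to\infty$ using $\psi_n^-=-e^{-u(t,\psi_n)}P^-_nC_\Phi(t,\psi_n)$ and the compactness of $C_\Phi$, and recovers $P^-\Phi(t,\psi)=0$ from $P^-_kP^-_n=P^-_k$ together with the density of $\bigcup_n H^{\frac12,-}_n$.

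You instead divide the negative component by $e^{u}$, which writes the zero problem as $\id-\mathcal{K}_t$ with $\mathcal{K}_t$ compact on $[0,1]\times\mathscr{C}_1(R)$, and then apply Leray--Schauder homotopy invariance directly. This rescaling is what makes Leray--Schauder applicable, since $(e^{u}-1)\psi^-$ need not be compact. Both arguments rest on the same structural property (2) and the same boundary observation coming from (1).

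The paper's Galerkin step is essentially a hand-made proof of the Leray--Schauder existence statement in this special case. It therefore needs only Brouwer degree, at the cost of an extra limiting argument. Your version is shorter and treats all $t$ at once, provided one accepts the Leray--Schauder theory.

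Your proposed induction showing that property (2), and the time-rescaled homotopy, survive composition of flow maps fills in a point the paper only asserts. It works because the exponents add and the new remainder $e^{u_2\circ\phi_1}C_1+C_2\circ\phi_1$ is compact, as flow maps send bounded sets to bounded sets.

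One small slip: the identification of zeros of $\widetilde G_t$ with points of the intersection comes from your opening equivalence ($P^-\Phi=0$ and $\tnorm{P^+\Phi}_{H^{1/2}}=r$), not from the identity $\mathscr{C}_1(R)\cap\mathscr{S}^+(r)=\{re\}$.
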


\begin{proof}
    This can be seen via a modification of the argument in~\cite{EstebanSere1995stationary}. 
    We give a proof for completeness. 
    
    For each~$n\geq 1$, let~$H^{\frac{1}{2},-}_n$ be the $n$-dimensional subspace of~$H^{\frac{1}{2},-}$ spanned by the eigenspinors corresponding to the largest~$n$ negative eigenvalues of~$\pD$, and let~$P^-_n$ denote the orthogonal projection onto this space. 
    Let
    \begin{align}
        \mathscr{C}_{1,n}(R)\coloneqq \mathscr{C}_1(R)\cap \parenthesis{  H^{\frac{1}{2},-}_n \oplus \R e}.
    \end{align}
    It is a cylinder in a finite-dimensional space. 
    For each~$t\in[0,1]$, consider the map 
    \begin{align}
        \widehat{\Phi}^t_{n} \colon \mathscr{C}_{1,n}(R)\to H^{\frac{1}{2},-}_n\times \R, \qquad 
        \widehat{\Phi}^t_n(\psi) \coloneqq  \parenthesis{  P_n^- \Phi(t,\psi), \tnorm{P^+\Phi(t,\psi)}_{H^{1/2}} } 
    \end{align}
    For each~$\psi=\psi^-+ \lambda e\in\p \mathscr{C}_{1,n}(R)$, we have~$\Phi(t,\psi)=\psi$ and hence~$\widehat{\Phi}^t_n (\psi)=(\psi^-, \lambda)$ which is different from~$(0,r)$, while for~$t=0$ there is precisely one element,~$re$, mapped to~$(0,r)$. 
    By the homotopy invariance of the finite-dimensional Brouwer degree, we see that for each~$t\in [0,1]$, 
    \begin{align}
        \deg(\widehat{\Phi}^t_{n}, \mbox{ int } \mathscr{C}_{1,n}(R); (0,r))
        =\deg(\widehat{\Phi}^0_{n}, \mbox{ int }\mathscr{C}_{1,n}(R); (0,r))=1 \neq 0.
    \end{align}
    Thus there exist~$\psi_n=\psi_n^- + \lambda_n e \in \mathscr{C}_{1,n}(R)$ such that~$\widehat{\Phi}^t_n(\psi_n)=(0,r)$, that is,
    \begin{align}
        P^-_n\Phi(t,\psi_n)=0, & & 
        \tnorm{P^+\Phi(t,\psi_n)}_{H^{1/2}}=r. 
    \end{align}
    We claim that, passing to a subsequence if necessary,~$\psi_n\to \psi\in \mathscr{C}_1(R)$ and~$\Phi(t,\psi_n)\to \Phi(t,\psi)\in\mathscr{S}^+(r)$, which verifies the nonempty intersection. 
    Indeed, since~$u$ is bounded and~$C_\Phi$ is compact, while~$\tnorm{\psi_n}_{H^{1/2}}\leq \sqrt{2} R$, by the structural assumption of~$\Phi(t,\psi)$,
    \begin{align}
        P^-\Phi(t,\psi_n) = e^{u(t,\psi_n)}\psi_n^- + C_\Phi(t,\psi_n), 
    \end{align}
    we see that, passing to a subsequence if necessary,~$\lambda_n\to \lambda\in [0,R]$,~$u(t,\psi_n)\to u_\infty\in \R$ and
    \begin{align}
        \psi_n^- = -e^{- u(t,\psi_n)}P_n^- C_\Phi(t,\psi_n) \to \psi^- \in H^{\frac{1}{2},-}
    \end{align}
    with~$\tnorm{\psi^-}_{H^{1/2}}\leq R$. Hence~$\psi_n=\psi_n^-+\lambda_n e\to \psi^-+\lambda e\in \mathscr{C}_1(R)$.
    Moreover,~$\tnorm{P^+\Phi(t,\psi)}_{H^{1/2}}=r$ by continuity. 
    
    To see that~$P^-\Phi(t,\psi)=0$, we note that for each~$n\geq 1$, we have~$P_n^-\Phi(t,\psi_n)=0$; and if~$n\geq k$, then~$P_k^- P_n^- = P_k^-$. 
    Thus for~$n\geq k$,
    \begin{align}
        P_k^- \Phi(t,\psi_n)=P_k^- P_n^-\Phi(t,\psi_n) =0, 
    \end{align}
    Since~$\psi_n\to\psi$ and~$\Phi(t,\cdot)$ is continuous, we see that
    \begin{align}
        P_k^-\Phi(t,\psi)=0
    \end{align}
    for any~$k\geq 1$. 
    Letting~$k\to+\infty$, using the density of~$\cup_{n\geq1} H^{\frac{1}{2},-}_n$ in~$H^{\frac{1}{2},-}$, we conclude that~$P^-\Phi(t,\psi)=0$. 
    Therefore,~$\Phi(t,\psi)\in \mathscr{S}^+(r)\cap \Phi(\mathscr{C}_1(R))$, as desired. 
\end{proof}

With these preparations, we can define the min-max level
\begin{align}
    \Lambda_1(\eps)\coloneqq \inf_{\tilde{\phi}\in\Gamma} \; \sup_{\tilde{\phi}(\mathscr{C}_1(R))} J_\eps 
\end{align}
Since~$\id\in\Gamma$, we have 
\begin{align}
    \Lambda_1(\eps)\leq \sup_{\mathscr{C}_1(R)} J_\eps \leq \frac{m-a}{2m}R^2. 
\end{align}
On the other hand, for any~$\tilde{\phi}\in\Gamma$, using Lemma~\ref{lemma:intersection} (applied with~$t=1$ and~$\Phi(1)=\tilde{\phi}$) and Lemma~\ref{lemma:positivity in interior-1}, we see that
\begin{align}
    \sup_{\tilde{\phi}(\mathscr{C}_1(R))} J_\eps\geq \inf_{\tilde{\phi}(\mathscr{C}_1(R)) \cap \mathscr{S}^+(r)} J_\eps\geq  \inf_{\mathscr{S}^+(r)} J_\eps \geq C_*>0.
\end{align}
Taking infimum over~$\tilde{\phi}\in\Gamma$ leads to~$\Lambda_1(\eps)\geq C_*>0$. 
We emphasize again that the lower bound~$C_*$ and the upper bound~$\frac{m-a}{2m}R^2$ are independent of~$\eps\in (0,1]$ but depend on~$m$ and~$a$.
In particular, if~$a$ is close to~$m$, then both bounds are small. 

\ 

It is then standard to show that~$\Lambda_1(\eps)$ is a critical level for~$J_\eps$. 
Indeed, it suffices to show that there is a sequence~$(\psi_n)$ with 
\begin{align}
    J_\eps(\psi_n)\to \Lambda_1(\eps), &&  \mbox{ and } & & 
    \dd J_\eps(\psi_n)\to 0 \quad \mbox{ in } H^{-\frac{1}{2}},
\end{align}
as~$n\to +\infty$. 
We have seen that for~$\eps\in(0,1]$, the functional~$J_\eps$ satisfies the Palais--Smale condition at any level. 
Thus~$\psi_n$ subconverges to a critical point~$\psi_\infty$ at the level of~$\Lambda_1(\eps)$, as desired. 

Suppose there is no such sequence. 
Then there exist~$\delta >0$ and~$\rho>0$ such that
\begin{align}
    |J_\eps(\psi)-\Lambda_1(\eps)|\leq 2\rho \Longrightarrow \tnorm{\dd J_\eps(\psi)}_{H^{-1/2}} \geq 2\delta.
\end{align}
We may assume that~$\rho<\frac{C_*}{4}$. 
For such a~$\delta>0$, let~$K_{\eps,\delta}$ be the vector field given in Lemma~\ref{lemma:global compact approx}. 
Also take a Lipschitz function~$\eta\colon \R\to [0,1] $ such that 
\begin{align}
    \eta\equiv 1 \quad \mbox{ on } \; [\Lambda_1(\eps)-\rho,\Lambda_1(\eps)+\rho]
\end{align}
and 
\begin{align}
    \eta\equiv 0 \quad \mbox{ in } \; \R\setminus [\Lambda_1(\eps)-2\rho, \Lambda_1(\eps)+2\rho].
\end{align}
Then consider the vector field 
\begin{align}
    W_{\eps,\delta}(\psi)\coloneqq \eta(J_\eps(\psi)) \frac{ P^+\psi -P^-\psi-K_{\eps,\delta}(\psi)}{ 1+ \tnorm{ P^+\psi-P^-\psi - K_{\eps,\delta}(\psi)}}_{H^{1/2}}.
\end{align}
By construction,~$W_{\eps,\delta}$ satisfies (W1-3) and (W5), while for (W4): in the set~$\braces{\eta>0}\subset \braces{ |J_\eps(\psi)-\Lambda_1(\eps)|<2\rho }$ there holds
\begin{align}
    \dd J_\eps(\psi)[ P^+\psi-P^-\psi - K_{\eps,\delta}(\psi)]
    =&\dd J_\eps(\psi) [\nabla J_\eps(\psi) + K_\eps(\psi) -K_{\eps,\delta}(\psi) ] \\
    \geq & \tnorm{\nabla J_\eps(\psi)} _{H^{-1/2}}\parenthesis{ \tnorm{\nabla J_\eps(\psi)}_{H^{-1/2}} -\delta}\\
    \geq& 2\delta (2\delta-\delta)=2\delta^2>0.
\end{align}
Thus~$W_{\eps,\delta}$ is an admissible pseudo-gradient field. 
Let~$\phi^{W_{\eps,\delta}}_t$ be the flow associated to~$-W_{\eps,\delta}$.
Note that if~$|J_\eps(\phi^{W_{\eps,\delta}}_t(\psi))-\Lambda_1(\eps)|<\rho$, then~$\tnorm{\dd J_\eps(\phi^{W_{\eps,\delta}}_t(\psi))}_{H^{-1/2}}\geq 2\delta$,~$\eta(J_{\eps}(\phi^{W_{\eps,\delta}}_t(\psi)))=1$ and hence
\begin{align}
    -\frac{\dd}{\dd t} J_\eps(\phi^{W_{\eps,\delta}}_t(\psi))
    =&  \dd J_\eps[W_{\eps,\delta}] \Big|_{ \phi^{W_{\eps,\delta}}_{t}(\psi) }
    = \frac{ \dd J_\eps \parenthesis{ \nabla J_\eps + K_\eps -K_{\eps,\delta}} }{ 1+ \tnorm{ \nabla J_\eps + K_\eps - K_{\eps,\delta} }_{H^{-1/2}} } \Big|_{\phi^{W_{\eps,\delta}}_t(\psi)} \\
    \geq& \frac{ \tnorm{\nabla J_\eps}^2_{H^{-1/2}} - \delta \tnorm{\nabla J_\eps}_{H^{-1/2}} }{1+\tnorm{\nabla J_\eps}_{H^{-1/2}}+\delta }\Big|_{\phi^{W_{\eps,\delta}}_t(\psi)} \\
    \geq& \frac{ \frac{1}{2}\tnorm{\nabla J_\eps}^2_{H^{-1/2}} }{ 1+ \frac{3}{2}\tnorm{\nabla J_\eps}_{H^{-1/2}} }\Big|_{\phi^{W_{\eps,\delta}}_t(\psi)}  \\
    \geq & \frac{\delta^2}{2+3\delta}>0.
\end{align}

Since~$\Lambda_1(\eps)$ is defined as an infimum, we can choose~$\tilde{\phi}\in\Gamma$ such that 
\begin{align}
    \sup_{\tilde{\phi}(\mathscr{C}_1(R))} J_\eps \leq \Lambda_1(\eps)+\rho.
\end{align}
Compose~$\tilde{\phi}$ with the time-$T$ map of the flow induced by~$-W_{\eps,\delta}$ as above, where~$T$ is so large that 
\begin{align}
    \frac{\delta^2}{2+3\delta} T > 2\rho.
\end{align}
Then~$\phi^{W_{\eps,\delta}}_T\circ \tilde{\phi}\in\Gamma$ and
\begin{align}
    \sup_{\phi^{W_{\eps,\delta}}_T\circ \tilde{\phi} (\mathscr{C}_1(R))} J_\eps \leq \Lambda_1(\eps)-\rho
\end{align}
which contradicts the definition of~$\Lambda_1(\eps)$. 
Therefore~$\Lambda_1(\eps)$ is a critical level of~$J_\eps$, namely, there exists~$\psi_\eps\neq 0$ with
\begin{align}
    J_\eps (\psi_\eps) = \Lambda_1(\eps)\in [C_*,\frac{m-a}{2m}R^2], & & 
    \dd J_\eps(\psi_\eps)=0. 
\end{align}

\begin{rmk}\label{rmk:multiplicity}
    It is tempting to hope for multiple solutions at this stage, especially when~$F$ is even in~$\psi$ or has more symmetries, as in~\cite{DingLiu2014periodic} and~\cite{DingRuf2008Solutions}. 
    This would require a restriction of the value of~$a$, or equivalently, that~$m\gg a$. 
    The latter condition on~$m$ is not too restrictive since, in the original setting, we should have~$mc^2$ where~$c$ stands for the speed of light. 
    There are plenty of works on the limiting behavior as~$c\to+\infty$. 
    We leave this, together with the multiplicity, to a future work. 
\end{rmk}


\section{Uniform estimates of the perturbed solutions}\label{sect:uniform estimate}

For each~$\eps\in (0,1]$, let~$0\neq \psi_\eps\in H^{\frac{1}{2}}(\mathbb{T}^3,\C^4)$ be a solution of~\eqref{eq:perturbative Dirac eq}. 
Moreover, suppose that 
\begin{align}\label{eq:uniform level}
    0<c_1 < J_\eps(\psi_\eps) < c_2<+\infty. 
\end{align}
We have seen that this is the case for those min-max solutions found in the previous sections. 
In this section we try to get some uniform estimates for these perturbed solutions, under additional assumptions. 

\begin{lemma}\label{lemma:estimate of F}
    There exists a constant~$C=C(\alpha_1, c_2)>0$ such that 
    \begin{align}
        0\leq \int_{\mathbb{T}^3} F_\eps(\psi_\eps)\, \dv\leq C.
    \end{align}
\end{lemma}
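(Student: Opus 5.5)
The plan is the standard Ambrosetti--Rabinowitz type computation: combine the value of the functional with its derivative tested against the solution itself. Since $\psi_\eps$ is a critical point of $J_\eps$, we have $\dd J_\eps(\psi_\eps)[\psi_\eps]=0$, that is,
\begin{align}
    \int_{\mathbb{T}^3} \Abracket{\psi_\eps,\pD\psi_\eps} - a|\psi_\eps|^2 \dv = \int_{\mathbb{T}^3} \p_\psi F_\eps(x,\psi_\eps)[\psi_\eps]\dv .
\end{align}
Here the left-hand side is understood in the duality pairing sense, which is legitimate because $\psi_\eps\in H^{\frac12}$ and $\p_\psi F_\eps(\cdot,\psi_\eps)\in L^{\frac{\alpha_2}{\alpha_2-1}}\subset H^{-\frac12}$ by (F2).

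Substituting this into the definition of $J_\eps$ eliminates the indefinite quadratic part entirely:
\begin{align}
    J_\eps(\psi_\eps) = J_\eps(\psi_\eps)-\frac12 \dd J_\eps(\psi_\eps)[\psi_\eps]
    = \int_{\mathbb{T}^3} \frac12 \p_\psi F_\eps(x,\psi_\eps)[\psi_\eps] - F_\eps(x,\psi_\eps)\dv .
\end{align}
This is the key point, since no control of $P^\pm\psi_\eps$ is needed. Next, invoke (F3) for $F_\eps$, which the paper assumes holds for all $\eps\in[0,1]$. One can also check it directly: the perturbation $\eps|\psi|^{\alpha_2}$ satisfies $\p_\psi(\eps|\psi|^{\alpha_2})[\psi]=\alpha_2\eps|\psi|^{\alpha_2}\geq \alpha_1\eps|\psi|^{\alpha_2}$ because $\alpha_2\ge\alpha_1$. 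Hence pointwise
\begin{align}
    \frac12 \p_\psi F_\eps(x,\psi_\eps)[\psi_\eps] - F_\eps(x,\psi_\eps)\geq \parenthesis{\frac{\alpha_1}{2}-1}F_\eps(x,\psi_\eps).
\end{align}

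Integrating and using the upper bound in \eqref{eq:uniform level} gives
\begin{align}
    \parenthesis{\frac{\alpha_1}{2}-1}\int_{\mathbb{T}^3}F_\eps(x,\psi_\eps)\dv \leq J_\eps(\psi_\eps) < c_2 ,
\end{align}
so the bound holds with $C=\frac{2c_2}{\alpha_1-2}$, which is finite because $\alpha_1>2$. The lower bound $0\le\int F_\eps$ is immediate from $F\ge0$ in (F1) together with $\eps|\psi|^{\alpha_2}\geq 0$.

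There is no real obstacle here. The only points needing care are that the pairing $\dd J_\eps(\psi_\eps)[\psi_\eps]$ is well defined, which follows from the subcritical growth (F2) and $H^{\frac12}\hookrightarrow L^3$ with $\alpha_2<3$, and that (F3) indeed holds for $F_\eps$ uniformly in $\eps$, as verified above.
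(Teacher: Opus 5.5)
Your proposal is correct and follows the paper's proof essentially verbatim: you test the equation against $\psi_\eps$ to eliminate the quadratic part, then apply (F3) for $F_\eps$, which yields $\int_{\mathbb{T}^3}F_\eps(\psi_\eps)\,\dv\leq \frac{2c_2}{\alpha_1-2}$. Your direct check that (F3) survives the perturbation, via $\alpha_2\geq\alpha_1$, and your remark that the duality pairing is well defined are sound additions to the same argument.
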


\begin{proof}
    Testing~\eqref{eq:perturbative Dirac eq} against~$\psi_\eps$, and using (F3) we get 
    \begin{align}
        2J_\eps(\psi_\eps) + 2\int_{\mathbb{T}^3} F_\eps(\psi_\eps)\dv 
        =&\int_{\mathbb{T}^3} \Abracket{\psi_\eps,\pD\psi_\eps}-a|\psi_\eps|^2\dv
        =\int_{\mathbb{T}^3} \p_\psi F_\eps(x,\psi_\eps)[\psi_\eps]\dv \\
        \geq &\alpha_1\int_{\mathbb{T}^3} F_\eps(\psi_\eps) \dv.
    \end{align}
    As~$\alpha_1>2$ and~$J_\eps(\psi_\eps)$ is bounded, it follows that 
    \begin{align}
        0\leq \int_{\mathbb{T}^3} F_\eps(\psi_\eps)\dv \leq \frac{2c_2}{\alpha_1-2}.
    \end{align}
\end{proof}

As a consequence, since~$F\geq 0$, we have 
\begin{align}\label{eq:uniform estimate for perturbation}
    0\leq \eps\int_{\mathbb{T}^3} |\psi_\eps|^{\alpha_2}\dv\leq C.  
\end{align}

\begin{rmk}
    From the proof of Lemma~\ref{lemma:estimate of F} we also see that 
\begin{align}
    0\leq \int_{\mathbb{T}^3} \p_\psi F_\eps(x,\psi_\eps)[\psi_\eps] \dv \leq 2c_2 + 2\frac{2c_2}{\alpha_1-2}= \frac{2 c_2\alpha_1}{\alpha_1-2}.
\end{align}
Moreover, due to (F4) and Lemma~\ref{lemma:estimate of F}, we see that 
\begin{align}
    \int_{\mathbb{T}^3} m\bar{\psi_\eps}\psi_\eps - a|\psi_\eps|^2\dv 
    \leq & (m-a)\int_{\mathbb{T}^3} \bar{\psi_\eps}\psi_\eps\dv \\
    \leq & (m-a)\parenthesis{\int_{\mathbb{T}^3} |\bar{\psi_\eps}\psi_\eps|^\nu\dv}^{\frac{1}{\nu}} (\vol(\mathbb{T}^3))^{1-\frac{1}{\nu}} \\
    \leq & (m-a) (\vol(\mathbb{T}^3))^{1-\frac{1}{\nu}} \parenthesis{\frac{1}{A_3} (\int_{\mathbb{T}^3} F_\eps(\psi_\eps)+A_4\dv  )  }^{\frac{1}{\nu}}\\
    \leq & C<+\infty.
\end{align}
It follows that 
\begin{align}
    \int_{\mathbb{T}^3} \Abracket{\psi_\eps,\D\psi_\eps}\dv \geq -C >-\infty.
\end{align}
\end{rmk}

    \begin{rmk}
        If one has  
        \begin{align}\label{eq:a strong assumption}
            \int_{\mathbb{T}^3} \p_\psi F_\eps(x,\psi_\eps) [\gamma^0\psi_\eps] \dv\leq C
        \end{align}
        then testing~\eqref{eq:perturbative Dirac eq} against~$\gamma^0\psi_\eps$, using~$|\bar\psi_\eps\psi_\eps|\le|\psi_\eps|^2$ and the uniform estimate~$\eps\int_{\T^3}|\psi_\eps|^{\alpha_2}\le C$, gives 
        \begin{align}
            (m-a)\int_{\mathbb{T}^3} |\psi_\eps|^2\dv \leq C
        \end{align}        
        which is the desired uniform~$L^2$ estimate. 
        But~\eqref{eq:a strong assumption} seems incompatible with the Soler type nonlinearity. 
        This is why we pick the assumption (F5) which was also used in~\cite{EstebanSere1995stationary}.        
    \end{rmk}

In the following we try to see whether the norms~$\|\psi_\eps\|_{L^3}$ are uniformly bounded or not. 
Suppose this is not the case. Then along a sequence~$\eps_n\searrow 0$, we have 
\begin{align}
    \lambda_n\equiv \|\psi_{\eps_n}\|_{L^3(\mathbb{T}^3)}\to +\infty.
\end{align}
The normalized spinors 
\begin{align}
    \varphi_n\coloneqq \frac{\psi_{\eps_n}}{\lambda_n}
\end{align}
satisfy~$\|\varphi_n\|_{L^3}=1$, and the equations
\begin{align}\label{eq:normalized eqn}
    \pD\varphi_n-a\varphi_n=\frac{\p_\psi F(x,\psi_{\eps_n})}{\lambda_n} + \alpha_2\eps_n |\psi_{\eps_n}|^{\alpha_2 -2 }\varphi_n, \qquad \mbox{ on } \; \mathbb{T}^3.
\end{align}
Observe that 
\begin{align}
    \|\alpha_2\eps_n |\psi_{\eps_n}|^{\alpha_2 -2 }\varphi_n \|_{L^{\frac{\alpha_2}{\alpha_2 -1}}}
    = & \frac{\alpha_2 \eps_n^{\frac{1}{\alpha_2}} }{\lambda_n}
     \parenthesis{ \int_{\mathbb{T}^3} \left|\eps_n^{\frac{\alpha_2-1}{\alpha_2}} |\psi_{\eps_n}|^{\alpha_2 -1} \right|^{\frac{\alpha_2}{\alpha_2-1}} \dv   }^{\frac{\alpha_2-1}{\alpha_2}}\\
    \leq& \frac{\alpha_2 \eps_n^{\frac{1}{\alpha_2}} }{\lambda_n}
    \parenthesis{ \int_{\mathbb{T}^3} \eps_n |\psi_{\eps_n}|^{\alpha_2}\dv}^{\frac{\alpha_2-1}{\alpha_2}} \\ 
    \leq& \frac{\alpha_2 \eps_n^{\frac{1}{\alpha_2}} }{\lambda_n}\parenthesis{ \frac{2c_2}{\alpha_1-2} }^{\frac{\alpha_2-1}{\alpha_2}}
\end{align}
thanks to~\eqref{eq:uniform estimate for perturbation}.
Since~$\alpha_2\in (2,3)$, we have~$\frac{\alpha_2}{\alpha_2-1}>\frac{3}{2}$. 
Thus the perturbation term will disappear in the limit as~$\eps_n\to 0$.

For the~$\p_\psi F$ term, it is more complicated. 
By (F5) we have 
\begin{align}
    \left|\frac{1}{\lambda_n} \p_\psi F(x,\psi_{\eps_n})\right|
    \leq C \parenthesis{1+ F(x,\psi_{\eps_n})^{\frac{1}{\beta}}} |\varphi_n| \in L^{\frac{3\beta}{3+\beta}}
\end{align}
and 
\begin{align}
    \|\frac{1}{\lambda_n}\p_\psi F(x,\psi_{\eps_n}(x))\|_{L^{\frac{3\beta}{3+\beta}}} 
    \leq C \parenthesis{ \vol(\mathbb{T}^3)+\frac{2c_2}{\alpha_1-2} }^{\frac{1}{\beta}}.
\end{align}
due to Lemma~\ref{lemma:estimate of F}.
Note that~$\frac{3\beta}{3+\beta}>\frac{3}{2}$ since~$\beta>3$ by assumption.
Therefore,~$ (\pD-a)\varphi_n \in L^p(\mathbb{T}^3)$ with
\begin{align}
    p= \min\braces{ \frac{3\beta}{3+\beta}, \frac{\alpha_2}{\alpha_2-1} } >\frac{3}{2},  
\end{align}
and 
\begin{align}
    \|(\pD-a)\varphi_n\|_{L^p} \leq 2C \parenthesis{ \vol(\mathbb{T}^3)+\frac{2c_2}{\alpha_1-2} }^{\frac{1}{\beta}}. 
\end{align}
Then we see that the sequence~$(\varphi_n)$ is uniformly bounded in~$W^{1,p}$ and, after passing to a subsequence, converges in~$L^3$ to some limit~$\varphi_\infty$ with~$\|\varphi_\infty\|_{L^3}=1$.

Moreover, combining (F4) and Lemma~\ref{lemma:estimate of F}, we see that 
\begin{align}
    \int_{\mathbb{T}^3} |\bar{\varphi}_{n}\varphi_{n}|^\nu\dv \leq \frac{1}{A_3\lambda_n^{2\nu}} \parenthesis{ \int_{\mathbb{T}^3} F_{\eps_n}(x,\psi_{\eps_n})\dv + A_4\vol(\mathbb{T}^3)}
    \leq \frac{C+A_4 \vol(\mathbb{T}^3)}{A_3\lambda_n^{2\nu}}\to 0
\end{align}
as~$n\to+\infty$. 
It follows that~$\bar{\varphi}_\infty\varphi_\infty\equiv 0$, i.e.~$\varphi_\infty$ is in the Lorentz null cone. 
In particular,~$\varphi_\infty$ cannot lie in~$\Eigen(\pD;m)$ since this eigenspace consists of constant spinors with only upper components, for which the Lorentz scalar coincides with the Euclidean amplitude. 
This can be strengthened as follows. 

Let~$\proj_m$ denote the orthogonal projection to~$\Eigen(\pD;m)$ and~$\proj_m^\perp = \id -\proj_m$ the complementary projection. 

\begin{lemma}
Let~$\nu\in (1,\frac{3}{2})$. 
    There exist~$\kappa_1>0$ and~$\kappa_2>0$ such that  
    \begin{align}
        \|\varphi\|_{L^3}=1,\quad \|\bar{\varphi} \varphi\|_{L^\nu}\leq\kappa_1 \quad\Longrightarrow\quad \|\proj_m^\perp \varphi\|_{L^3}\geq\kappa_2. 
    \end{align}
\end{lemma}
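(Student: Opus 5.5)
The plan is to argue by contraposition: if $\proj_m^\perp\varphi$ is small in $L^3$, then $\varphi$ is close to a constant spinor with only upper components. For such a spinor the Lorentz scalar equals the Euclidean amplitude, so $\|\bar{\varphi}\varphi\|_{L^\nu}$ is bounded from below. The argument is quantitative and needs no compactness.

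First I will identify $\proj_m$ explicitly. By Lemma~\ref{lemma:spectrum D}, $\Eigen(\pD;m)=\mathscr{P}$ consists of the constant spinors with vanishing lower components. The $L^2$-orthogonal projection onto it is therefore
\begin{align}
    \proj_m\varphi=\frac{1}{\vol(\mathbb{T}^3)}\int_{\mathbb{T}^3}\begin{pmatrix}\varphi_{\up}\\0\end{pmatrix}\dv .
\end{align}
This is a constant, hence it is bounded on $L^3$, and $\proj_m^\perp=\id-\proj_m$ makes sense on $L^3$. Write $e=\proj_m\varphi$ and $w=\proj_m^\perp\varphi$, and suppose $\|w\|_{L^3}<\kappa_2$, with $\kappa_2\le\frac12$ to be fixed. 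Then $\|e\|_{L^3}\geq 1-\kappa_2\geq\frac12$ and $\|e\|_{L^3}\le 1+\kappa_2\le 2$. Since $e$ is constant, $|e|=\|e\|_{L^3}\vol(\mathbb{T}^3)^{-1/3}$.

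Next I expand the Lorentz scalar. Because $\gamma^0e=e$ and $\gamma^0$ is a Hermitian involution,
\begin{align}
    \bar{\varphi}\varphi=|e|^2+2\operatorname{Re}\Abracket{e,\gamma^0 w}+\bar{w}w,
    \qquad
    \big|\bar{\varphi}\varphi-|e|^2\big|\le 2|e||w|+|w|^2 .
\end{align}
Since $\nu<\frac32$ and the torus has finite volume, $\|\cdot\|_{L^\nu}\le C_V\|\cdot\|_{L^{3/2}}$ with $C_V$ depending only on $\vol(\mathbb{T}^3)$. By Hölder's inequality,
\begin{align}
    \big\|\bar{\varphi}\varphi-|e|^2\big\|_{L^\nu}\le C_V\big(2\|e\|_{L^3}\|w\|_{L^3}+\|w\|_{L^3}^2\big)\le C_V(4\kappa_2+\kappa_2^2).
\end{align}
For the main term,
\begin{align}
    \||e|^2\|_{L^\nu}=|e|^2\vol(\mathbb{T}^3)^{1/\nu}\geq \tfrac14\vol(\mathbb{T}^3)^{1/\nu-2/3}=:c_0>0 .
\end{align}
The triangle inequality then gives $\|\bar{\varphi}\varphi\|_{L^\nu}\geq c_0-C_V(4\kappa_2+\kappa_2^2)$.

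To finish, I choose $\kappa_2\in(0,\frac12]$ so small that $C_V(4\kappa_2+\kappa_2^2)\le c_0/2$, and then set $\kappa_1=c_0/4$. With these choices, $\|w\|_{L^3}<\kappa_2$ forces $\|\bar{\varphi}\varphi\|_{L^\nu}\geq c_0/2>\kappa_1$, which is exactly the contrapositive of the claim. I do not expect a real obstacle here. The only point needing care is that $\proj_m$, which is defined as an $L^2$ projection, acts boundedly on $L^3$; this holds because it is an averaging onto constants. One should also check that the constants depend only on $\vol(\mathbb{T}^3)$ and $\nu$.
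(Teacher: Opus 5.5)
Your proof is correct, and it takes a different route from the paper.

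The paper argues by contradiction. It takes a sequence with $\|\varphi_k\|_{L^3}=1$, $\|\bar{\varphi}_k\varphi_k\|_{L^\nu}\to 0$ and $\|\proj_m^\perp\varphi_k\|_{L^3}\to 0$. Since $\Eigen(\pD;m)\cong\C^2$ is finite-dimensional, it extracts a limit $\varphi_*$ of $\proj_m\varphi_k$. This limit satisfies both $\|\varphi_*\|_{L^3}=1$ and $\bar{\varphi}_*\varphi_*=0$, which is impossible because $\bar{\varphi}_*\varphi_*=|\varphi_*|^2$ on that eigenspace.

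You use the same structural fact, namely that the Lorentz scalar equals the Euclidean amplitude on $\Eigen(\pD;m)$, but you make it quantitative:
\begin{itemize}
\item You identify $\proj_m$ explicitly as averaging of the upper components. This also justifies the $L^3$-boundedness of $\proj_m$, which the paper only asserts; in fact the bound holds with constant $1$.
\item You use $\gamma^0 e=e$ to expand $\bar{\varphi}\varphi$ around $|e|^2$.
\item You control the error terms by H\"older.
\end{itemize}

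The paper's compactness argument is shorter but gives no constants. Your version gives them explicitly. Since your $c_0$ equals $C_V/4$ with $C_V=\vol(\mathbb{T}^3)^{1/\nu-2/3}$, your smallness condition reduces to $4\kappa_2+\kappa_2^2\leq\frac{1}{8}$. Hence $\kappa_2=\frac{1}{40}$ works for every torus, and $\kappa_1=\frac{1}{16}\vol(\mathbb{T}^3)^{1/\nu-2/3}$. Such explicit constants could be fed into the subsequent blow-up argument to make the threshold $m_*$ of the main theorem more explicit.
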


\begin{proof}
    If not, then there would be a sequence~$(\varphi_k)$ such that 
    \begin{align}
        \|\varphi_k\|_{L^3}=1, \quad \|\bar{\varphi}_k\varphi_k\|_{L^\nu} \to 0, 
        \quad \|\proj_m^\perp \varphi_k\|_{L^3}\to 0, \qquad \mbox{ as } \; k\to+\infty.
    \end{align}
    As~$\|\proj_m\varphi_k\|_{L^3}\leq C\|\varphi_k\|_{L^3}=C$, the sequence~$(\proj_m\varphi_k)$ is bounded in~$\Eigen(\pD;m)\cong\C^2$. 
    Thus, passing to a subsequence if necessary, we see that~$\proj_m\varphi_k$ converges to, say,~$\varphi_{*}$.
    Together with the convergence~$\|\proj_m^\perp \varphi_k\|_{L^3}\to0$, we see that 
    \begin{align}
        \|\varphi_{*}\|_{L^3}=1, & & \bar{\varphi}_* \varphi_*=0.
    \end{align}
    This is impossible, since~$\varphi_*\in\Eigen(\pD;m)$ and~$\bar{\varphi}_*\varphi_*=|\varphi_*|^2$.
\end{proof}

\begin{lemma}
    For any~$a\in [0,m]$, there exists a constant~$C=C(m,\mathbb{T}^3)$ such that 
    \begin{align}
        \|\proj_m^\perp \varphi\|_{L^3}\leq C\|\proj_m^\perp(\pD-a)\varphi\|_{L^{\frac{3}{2}}}. 
    \end{align}
\end{lemma}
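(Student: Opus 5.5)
The plan is to reduce the estimate to an elliptic $L^p$ bound for the invertible operator $\pD-a$ restricted to the orthogonal complement of $\Eigen(\pD;m)$. All spectral projections of $\pD$ commute with $\pD$, so
\begin{align}
\proj_m^\perp(\pD-a)\varphi=(\pD-a)\proj_m^\perp\varphi .
\end{align}
Setting $\phi=\proj_m^\perp\varphi$, it therefore suffices to show
\begin{align}
\|\phi\|_{L^3}\le C\|(\pD-a)\phi\|_{L^{3/2}}
\end{align}
for all $\phi$ in the range of $\proj_m^\perp$, with $C$ independent of $a\in[0,m]$.

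\textbf{Step 1: a spectral gap uniform in $a$.} By Lemma~\ref{lemma:spectrum D}, the spectrum of $\pD$ on the range of $\proj_m^\perp$ is $\{-m\}\cup\{\pm\sqrt{\mu^2+m^2}:\mu\ne0\}$. Let $\mu_1=2\pi/\ell_3>0$ be the smallest nonzero eigenvalue of $\D$. For every $a\in[0,m]$ we then have
\begin{align}
|\lambda-a|\ge \min\{m,\ \sqrt{\mu_1^2+m^2}-m\}=:\delta_0>0 .
\end{align}
Hence $\pD-a$ is invertible on $\operatorname{Ran}\proj_m^\perp$, and its inverse $T_a$ is given by the Fourier multiplier $(\lambda-a)^{-1}$ on each eigenspace.

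\textbf{Step 2: the $L^{3/2}\to L^3$ bound.} I would write the inverse as
\begin{align}
T_a=(\pD+a)(\pD^2-a^2)^{-1}=(\pD+a)(-\Delta+m^2-a^2)^{-1},
\end{align}
using $\pD^2=-\Delta+m^2$ componentwise. On the range of $\proj_m^\perp$, the constant $\gamma^0$-eigenvalue $+1$ modes are removed. The remaining constant modes ($\lambda=-m$) contribute an operator of rank two with norm at most $1/(m+a)\le 1/m$, which is harmless. On the nonconstant modes, $-\Delta+m^2-a^2\ge\mu_1^2$, so $T_a$ is a Fourier multiplier on $\T^3$ whose symbol $\sigma_a(\xi)$ satisfies the Mikhlin-type bounds
\begin{align}
|\partial^\beta\sigma_a(\xi)|\le C_\beta\langle\xi\rangle^{-1-|\beta|}
\end{align}
uniformly in $a\in[0,m]$. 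By Mikhlin multiplier theory, which transfers to the torus via the periodic transference principle, this gives
\begin{align}
\|T_a f\|_{W^{1,3/2}}\le C\|f\|_{L^{3/2}} .
\end{align}
The Sobolev embedding $W^{1,3/2}(\T^3)\hookrightarrow L^3(\T^3)$ then yields the claim. Alternatively, one can argue as elsewhere in the paper: standard elliptic $L^p$ estimates for the first-order elliptic operator $\pD-a$ give
\begin{align}
\|\phi\|_{W^{1,3/2}}\le C\bigl(\|(\pD-a)\phi\|_{L^{3/2}}+\|\phi\|_{L^{1}}\bigr),
\end{align}
with $C$ uniform for $a$ in the compact interval $[0,m]$. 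The lower-order term is then removed by a compactness–contradiction argument, which uses the gap of Step 1 to exclude kernel elements in the limit.

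\textbf{Main obstacle.} The main difficulty is keeping the constant independent of $a\in[0,m]$. The point to emphasize is that removing $\Eigen(\pD;m)$ is exactly what prevents $\lambda-a$ from degenerating as $a\to m$. In the contradiction argument I would take $a_k\to a_\infty\in[0,m]$ and $\phi_k$ with $\|\phi_k\|_{L^3}=1$ and $\|(\pD-a_k)\phi_k\|_{L^{3/2}}\to0$. By the elliptic estimate and the compact embedding $W^{1,3/2}\hookrightarrow L^q$ for $q<3$, the $\phi_k$ converge in $L^q$ for every $q<3$. The limit $\phi_\infty$ is an eigenspinor of $\pD$ with eigenvalue $a_\infty$, lying in the range of $\proj_m^\perp$, so Step 1 forces $\phi_\infty=0$. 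Feeding this back into the elliptic estimate gives $\|\phi_k\|_{W^{1,3/2}}\to0$, and hence $\|\phi_k\|_{L^3}\to0$, a contradiction.
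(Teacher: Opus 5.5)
Your proposal is correct. The compactness--contradiction argument you give under ``Alternatively'' and in your ``Main obstacle'' paragraph is essentially the paper's proof. The paper normalizes $\|\proj_m^\perp\varphi_k\|_{L^3}=1$ and gets a uniform $W^{1,3/2}$ bound from the elliptic estimate. It extracts a limit that is an eigenspinor of $\pD$ with eigenvalue $a_*\in[0,m]$ and is orthogonal to $\Eigen(\pD;m)$. This limit vanishes because $\dist(a_*,\Spect(\pD)\setminus\{m\})>0$, which your $\delta_0$ makes quantitative, and feeding this back into the elliptic estimate gives the contradiction.

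Your primary Step 2 is a genuinely different route. You invert $\pD-a$ explicitly on the range of $\proj_m^\perp$. The two-dimensional constant block with $\lambda=-m$ is handled by hand, since there the inverse is multiplication by $-(m+a)^{-1}$. The nonconstant part is controlled by a Mikhlin/Marcinkiewicz multiplier theorem on $\T^3$.

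This route gives an explicit constant and makes the uniformity in $a$ transparent. It reduces to the bound $|\partial^\beta(4\pi^2|\xi|^2+c)^{-1}|\le C_\beta|\xi|^{-2-|\beta|}$, which holds uniformly in $c=m^2-a^2\ge 0$. The price is heavier harmonic analysis, and one detail should be made explicit. The natural symbol $\sigma_a(\xi)=\bigl(\sum_k 2\pi\gamma^0\gamma^k\xi_k+m\gamma^0+a\bigr)/(4\pi^2|\xi|^2+m^2-a^2)$ is singular at $\xi=0$ when $a=m$. So before using transference you must replace it by $(1-\chi(\xi))\sigma_a(\xi)$, where $\chi$ is smooth, supported in $\{|\xi|<1/\ell_3\}$, and equal to $1$ near $0$. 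The modified symbol agrees with $\sigma_a$ at every nonzero lattice point and satisfies the Mikhlin bounds uniformly for $a\in[0,m]$.

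The paper's route avoids multiplier theory entirely. It needs only the standard elliptic $L^p$ estimate and Rellich compactness, at the cost of a non-explicit constant.
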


    \begin{proof}
        We again use a contradiction argument. 
        So suppose there exist sequences~$(a_k)\subset [0,m]$ and~$(\varphi_k)\subset W^{1,\frac{3}{2}}$ such that 
        \begin{align}
            \|\proj_m^\perp\varphi_k\|_{L^3}=1, & & 
            \|\proj_m^\perp(\pD-a_k)\varphi_k\|_{L^{\frac{3}{2}}}\to 0.
        \end{align}
        Note that 
        \begin{align}
            \|\varphi\|_{W^{1,\frac{3}{2}}} 
            \leq& C \parenthesis{ \| \varphi\|_{L^{\frac{3}{2}}} + \|\pD\varphi\|_{L^{\frac{3}{2}}} } \\
            \leq& C \parenthesis{ \| \varphi\|_{L^{\frac{3}{2}}} + \|(\pD-a)\varphi\|_{L^{\frac{3}{2}}} + a\|\varphi\|_{L^{\frac{3}{2}}} } \\
            \leq& C \parenthesis{ (m+1)\| \varphi\|_{L^{\frac{3}{2}}} + \|(\pD-a)\varphi\|_{L^{\frac{3}{2}}} }. 
        \end{align}
        Since~$\proj_m^\perp$ commutes with~$(\pD-a_k)$, we have 
        \begin{align}
            \|\proj_m^\perp \varphi_k\|_{W^{1,\frac{3}{2}}}
            \leq C(m, \vol(\mathbb{T}^3))\parenthesis{ \| \proj_m^\perp \varphi_k \|_{L^3} + \|\proj_m^\perp (\pD-a_k)\varphi_k\|_{L^{\frac{3}{2}}} },  
        \end{align}
        so the sequence is uniformly bounded.
        Thus, passing to a subsequence if necessary, we may assume 
        \begin{align}
            a_k\to a_*\in [0,m], & & \mbox{ and } & & 
            \proj_m^\perp\varphi_k \to \varphi_*=\proj_m^\perp \varphi_*\quad  \mbox{ in } \;  L^{\frac{3}{2}}. 
        \end{align}
        Moreover, from
        \begin{align}
            \pD(\proj_m^\perp\varphi_k)= \proj_m^\perp (\pD-a_k)\varphi_k+ a_k \proj_m^\perp\varphi_k \xrightarrow{L^{\frac{3}{2}}} 0+ a_*\varphi_*
        \end{align}
        it follows that 
        \begin{align}
            \pD\varphi_*=a_*\varphi_*
        \end{align}
        first in the sense of distribution, then by elliptic regularity in the classical sense.
        Since~$\varphi_* =\proj_m^\perp\varphi_*\perp \Eigen(\pD;m)$ while~$\dist(a_*, \Spect(\pD)\setminus\braces{m})>0$, we must have~$\varphi_*=0$.
        Thus in turn we have 
        \begin{align}
            \|\proj_m^\perp \varphi_k\|_{W^{1,\frac{3}{2}}} \to 0
        \end{align}
        as~$k\to+\infty$, and by Sobolev embedding also~$\|\proj_m^\perp\varphi_k\|_{L^3}\to 0$, a contradiction to the initial assumption~$\|\proj_m^\perp\varphi_k\|_{L^3}=1$ for any~$k$.
    \end{proof}

\begin{rmk}
    In the above lemma, we can fix~$a\in (0,m)$ and get a conclusion of the same form. 
    But here we want to emphasize the uniform constant~$C(m,\mathbb{T}^3)$ which does not depend on~$a\in [0,m]$.
\end{rmk}

Recall that~$\varphi_n=\frac{\psi_{\eps_n}}{\lambda_n}$ satisfies~$\|\varphi_n\|_{L^3}=1$,~$\|\bar{\varphi}_n\varphi_n\|_{L^\nu}\to 0$ and
\begin{align}
    (\pD-a)\varphi_n= \frac{1}{\lambda_n}\p_\psi F(x,\psi_{\eps_n}) + \alpha_2\eps_n|\psi_{\eps_n}|^{\alpha_2-2}\varphi_n.
\end{align}
We may assume~$\|\bar{\varphi}_n\varphi_n\|_{L^\nu}\leq \kappa_1$, so that (using~$\|\varphi_n\|_{L^3}=1$) 
\begin{align}
    0<\kappa_2\leq \|\proj_m^\perp \varphi_n\|_{L^3} 
    \leq& C(m) \|\proj_m^\perp (\pD-a)\varphi_n\|_{L^{\frac{3}{2}}} \\
    \leq& C(m,\vol(\mathbb{T}^3),\beta,\alpha_2)\parenthesis{ \|\frac{1}{\lambda_n}\p_\psi F(x,\psi_{\eps_n})\|_{L^{\frac{3\beta}{3+\beta}} } + \alpha_2\|\eps_n|\psi_{\eps_n}|^{\alpha_2-2}\varphi_n \|_{L^{\frac{\alpha_2}{\alpha_2-1}} } } \\
    \leq & C(m,\vol(\mathbb{T}^3),\beta,\alpha_2)
    \parenthesis{ A_5\parenthesis{\delta+ C_\delta \parenthesis{ \int_{\mathbb{T}^3} F(x,\psi_{\eps_n})\dv}^{\frac{1}{\beta}} } + \frac{\eps_n^{\frac{1}{\alpha_2}}}{\lambda_n} } \\
    \leq &  C(m,\vol(\mathbb{T}^3),\beta,\alpha_2, A_5)\parenthesis{ \delta + C_\delta c_2^{\frac{1}{\beta} } + \frac{\eps_n^{\frac{1}{\alpha_2}}}{\lambda_n} }.
\end{align} 
Choose first~$\delta$ sufficiently small, and then~$c_2$ sufficiently small (which is possible if~$a$ is close to~$m$), and finally~$n$ sufficiently large, we can make the right hand side smaller than~$\kappa_2$, thus arriving at a contradiction.

Therefore, we have proven the following.
\begin{prop}
    There exists~$m_*\in (0,m)$ such that for any~$a\in (m_*,m)$, the family~$(\psi_{\eps})_{\eps\in(0,1]}$ obtained in the previous sections is uniformly bounded in~$L^3$.
\end{prop}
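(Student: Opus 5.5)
The plan is to argue by contradiction, assembling the estimates just established. First I make the dependence on $a$ explicit. The critical values satisfy $J_\eps(\psi_\eps)=\Lambda_1(\eps)\in[C_*,\frac{m-a}{2m}R^2]$, so in \eqref{eq:uniform level} I may take $c_2=\frac{m-a}{2m}R^2+1\cdot(m-a)$, or any constant slightly above $\frac{m-a}{2m}R^2$. Here $R$ is the radius from Lemma~\ref{lemma:local control} with $\mu=a$. By the remark after that lemma, $R$ can be chosen bounded uniformly for $a$ in, say, $[m/2,m)$, since the lower bound on $R$ is decreasing in $a$. Hence $c_2=c_2(a)\to0$ as $a\to m^-$. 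I also check that the two auxiliary lemmas give constants uniform in $a$: $\kappa_1,\kappa_2$ do not depend on $a$ at all, and the projection estimate has $C=C(m,\mathbb{T}^3)$ uniform for $a\in[0,m]$. The $W^{1,p}$ and $L^{3\beta/(3+\beta)}$ bounds on $\frac{1}{\lambda_n}\p_\psi F$ depend only on $c_2$ through Lemma~\ref{lemma:estimate of F}, and $c_2\le c_2(m/2)$ for $a\ge m/2$.

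Next, for fixed $a$, suppose $\lambda_n=\|\psi_{\eps_n}\|_{L^3}\to\infty$ along some $\eps_n\to0$, and normalize to $\varphi_n=\psi_{\eps_n}/\lambda_n$. By (F4) and Lemma~\ref{lemma:estimate of F}, $\|\bar\varphi_n\varphi_n\|_{L^\nu}\le C\lambda_n^{-2}\to0$, so for large $n$ this is at most $\kappa_1$. The first lemma then gives $\|\proj_m^\perp\varphi_n\|_{L^3}\ge\kappa_2$. Applying the second lemma together with Hölder on $\mathbb{T}^3$ to pass from $L^p$, $p>3/2$, to $L^{3/2}$, and using that $\proj_m^\perp$ is bounded on $L^{3/2}$ (it subtracts a finite-rank projection onto constants), I bound $\|\proj_m^\perp(\pD-a)\varphi_n\|_{L^{3/2}}$ by the two right-hand-side terms of \eqref{eq:normalized eqn}.

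The perturbation term is $O(\eps_n^{1/\alpha_2}/\lambda_n)\to0$. For the $\p_\psi F$ term I use (F5) pointwise: $|\p_\psi F(x,\psi)|/\lambda_n\le A_5(\delta+C_\delta F^{1/\beta})|\varphi_n|$. Hölder with $\|\varphi_n\|_{L^3}=1$ gives an $L^{3/2}$ bound of $A_5\bigl(\delta\,\vol^{1/3}+C_\delta\|F(\cdot,\psi_{\eps_n})\|_{L^1}^{1/\beta}\|\varphi_n\|_{L^3}\bigr)$ after adjusting exponents (using $\beta>3$ and Hölder in $L^{3\beta/(\beta+3)}\subset L^{3/2}$). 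Since $\|F\|_{L^1}\le 2c_2/(\alpha_1-2)$, this yields
\[
\kappa_2\le C_0\bigl(\delta+C_\delta c_2(a)^{1/\beta}+\eps_n^{1/\alpha_2}/\lambda_n\bigr),
\]
where $C_0$ is independent of $a\in[m/2,m)$ and $n$.

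Finally I fix $\delta$ with $C_0\delta<\kappa_2/3$, and then $m_*\in[m/2,m)$ such that $C_0C_\delta c_2(a)^{1/\beta}<\kappa_2/3$ for all $a\in(m_*,m)$. Letting $n\to\infty$ gives a contradiction, so $\sup_\eps\|\psi_\eps\|_{L^3}<\infty$. The main obstacle is the bookkeeping of uniformity in $a$: $m_*$ must be chosen before $n$, and all constants other than $c_2$ must be independent of $a$. In particular, $R(a)$ from Lemma~\ref{lemma:local control} must stay bounded as $a\to m$, which the remark guarantees, and the projection lemma must hold uniformly in $a$, which was proven.
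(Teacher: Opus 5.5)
Your proposal is correct and follows the paper's argument. You normalize, use (F4) to force $\bar{\varphi}_n\varphi_n\to 0$ in $L^\nu$, and combine the two lemmas with (F5) to get $\kappa_2\le C\bigl(\delta+C_\delta c_2^{1/\beta}+\eps_n^{1/\alpha_2}/\lambda_n\bigr)$; then you choose $\delta$, then $a$ close to $m$, then $n$. Your explicit bookkeeping makes precise what the paper leaves implicit: $R$ bounded for $a\ge m/2$ so that $c_2(a)\to 0$, and $\kappa_1,\kappa_2$ and the projection constant independent of $a$.
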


Once the uniform~$L^3$ bound is obtained, a standard bootstrap argument applied to the equation of~$\psi_\eps$ gives a uniform~$H^1$ bound.

\begin{cor}
     There exist~$m_*\in (0,m)$ and~$C>0$ such that for any~$a\in (m_*,m)$, the family~$(\psi_{\eps})_{\eps\in(0,1]}$ obtained in the previous sections satisfies 
    \begin{align}
        \|\psi_\eps\|_{H^1(\T^3)} \leq C.
    \end{align}
\end{cor}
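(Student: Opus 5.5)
The argument is a bootstrap on the perturbed equation \eqref{eq:perturbative Dirac eq}:
\begin{align}
    \pD\psi_\eps = a\psi_\eps + \p_\psi F(x,\psi_\eps) + \eps\alpha_2|\psi_\eps|^{\alpha_2-2}\psi_\eps .
\end{align}
We start from the uniform $L^3$ bound of the preceding Proposition, valid for $a\in(m_*,m)$ with the same $m_*$. The tool is the elliptic estimate for $\pD$ on $\mathbb{T}^3$. Since $\pD$ is invertible (its spectrum avoids $0$ by Lemma~\ref{lemma:spectrum D}), Calder\'on--Zygmund theory, i.e.\ the $L^p$ boundedness of the zero-order operator $\pd\,|\pD|^{-1}$-type multipliers, gives
\begin{align}
    \|\psi\|_{W^{1,p}}\le C_p\|\pD\psi\|_{L^p}, \qquad 1<p<\infty .
\end{align}
We combine this with the Sobolev embedding $W^{1,p}\hookrightarrow L^{3p/(3-p)}$ for $p<3$.

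First, by (F2) and the continuity of $\p_\psi F$, the right-hand side $g_\eps$ satisfies the pointwise bound
\begin{align}
    |g_\eps|\le C\big(|\psi_\eps| + 1 + |\psi_\eps|^{\alpha_2-1}\big),
\end{align}
with $C$ independent of $\eps\in(0,1]$, since $\eps\le 1$. Set $q_0=3$ and $\|\psi_\eps\|_{L^{q_0}}\le C$. Then $g_\eps\in L^{p_0}$ with $p_0=q_0/(\alpha_2-1)$, and $p_0>3/2$ because $\alpha_2<3$. Hence $\psi_\eps$ is bounded in $W^{1,p_0}$, and therefore in $L^{q_1}$ with $q_1=3p_0/(3-p_0)$ if $p_0<3$. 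Any $p_0\ge 3$ immediately gives every $L^q$.

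The iteration improves strictly. The map $q\mapsto \frac{3q/(\alpha_2-1)}{3-q/(\alpha_2-1)}$ satisfies $q_{k+1}>q_k$ precisely when $\frac{3}{\alpha_2-1}>3-\frac{q_k}{\alpha_2-1}$, i.e.\ when $q_k>3(\alpha_2-2)$, which holds since $q_k\ge 3>3(\alpha_2-2)$. Moreover the gain $1/q_k-1/q_{k+1}$ is bounded below, because $\frac{1}{q_{k+1}}=\frac{\alpha_2-1}{q_k}-\frac13$ and $\frac{\alpha_2-2}{q_k}<\frac13$. So after finitely many steps, their number depending only on $\alpha_2$, we obtain $p_k\ge 2$. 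At that point $g_\eps$ is uniformly bounded in $L^2$, and the elliptic estimate with $p=2$ (simply Parseval on $\mathbb{T}^3$, since $\|\psi\|_{H^1}\le C\|\pD\psi\|_{L^2}$ by the spectral description) yields $\|\psi_\eps\|_{H^1}\le C$, uniformly in $\eps\in(0,1]$ and in $a\in(m_*,m)$.

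The main point to check is that all constants stay uniform. The elliptic constants depend only on $m$ and $\mathbb{T}^3$. The growth constants come from (F2), together with the local bound on $\p_\psi F$ for small $|\psi|$. The perturbation term uses only $\eps\le1$. The Proposition's $L^3$ bound is itself uniform in $\eps$. One step needs a brief justification: applying the $L^p$ estimate requires $\psi_\eps$ a priori in $W^{1,p}$. This follows since $\psi_\eps\in H^{1/2}\subset L^3$ makes $\pD\psi_\eps\in L^{p_0}$ distributionally, and $\pD^{-1}$ maps $L^{p}$ to $W^{1,p}$.
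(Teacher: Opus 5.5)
Your proposal is correct and is essentially the paper's proof. You run the same bootstrap from the uniform $L^3$ bound through $W^{1,3/(\alpha_2-1)}\hookrightarrow L^{3/(\alpha_2-2)}$, iterating until the right-hand side lies in $L^2$; your lower bound on the gain in $1/q_k$ (at least $(3-\alpha_2)/3$, since $q_k\geq 3$) and the a priori $W^{1,p}$ justification make explicit what the paper calls ``finitely many'' subcritical steps. One caveat, shared with the paper: uniformity of $C$ in $a\in(m_*,m)$, as opposed to in $\eps$, holds only to the extent that the preceding Proposition's $L^3$ bound is uniform in $a$. You should state that uniformity as an assumed input rather than claim it as an output.
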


\begin{proof}
    In the equation
    \begin{align}
        \pD\psi_\eps
        = a\psi_\eps + \p_\psi F(x,\psi_\eps) + \alpha_2 \eps|\psi_{\eps}|^{\alpha_2 -2}\psi_\eps
    \end{align}
    the right hand side satisfies
    \begin{align}
        |a\psi_\eps + \p_\psi F(x,\psi_\eps)+ \alpha_2\eps|\psi_\eps|^{\alpha_2-2}\psi_\eps | 
        \leq 
        C\parenthesis{ 1+ |\psi_\eps|^{\alpha_2 -1}}
    \end{align}
    which is uniformly bounded in~$L^{\frac{3}{\alpha_2 -1 }}$ with~$\frac{3}{\alpha_2 -1}>\frac{3}{2}$. 
    The elliptic regularity theory implies that the spinors~$\psi_\eps $ are uniformly bounded in~$W^{1,\frac{3}{\alpha_2-1}} \hookrightarrow L^{\frac{3}{\alpha_2-2}}$. 
    Note that~$\frac{3}{\alpha_2-2}>3$ since~$\alpha_2<3$. 
    Repeating this subcritical bootstrap finitely many times reaches an exponent sufficient for an~$L^2$ right hand side, hence a uniform~$H^1$-bound.
\end{proof}

\section{Proof of the main theorem}\label{sect:last}

\subsection{Proof of Theorem~\ref{thm:main}}
Now we see that for each~$\eps\in (0,1]$ the perturbed functional~$J_\eps$ admits a min-max critical point~$\psi_\eps \in H^{\frac{1}{2}}$ such that 
\begin{align}
    J_\eps(\psi_\eps) \in (c_1, c_2), & & 
    \|\psi_\eps\|_{H^1}\leq C<+\infty. 
\end{align}
Up to a subsequence~${\eps_n\to 0^+}$, we assume that~$\psi_{\eps_n}\to \psi_0$ weakly in~$H^1$ and strongly in~$H^{\frac{1}{2}}\cap L^q$ for any~$1<q<6$.
In particular
\begin{align}
    \|\eps_n|\psi_{\eps_n}|^{\alpha_2 -2}\psi_{\eps_n} \|_{L^2}
    =\eps_n \|\psi_{\eps_n}\|_{L^{2(\alpha_2-1)}}^{\alpha_2 -1} 
    \leq C\eps_n \|\psi_{\eps_n}\|_{H^1}^{\alpha_2 -1} \to 0,\quad \mbox{ as } \eps_n \to 0^+
\end{align} 
since~$2(\alpha_2 -1)<6$. 
It follows that the limit spinor~$\psi_0$ solves the equation~\eqref{eq:NDE-spatial} in~$H^{-\frac{1}{2}}$, and thanks to the uniform~$H^1$ bound on these perturbed solutions we have 
\begin{align}
    \eps_n\int_{\mathbb{T}^3} |\psi_{\eps_n}|^{\alpha_2}\dv\to 0
\end{align}
as~$\eps_n\to 0^+$. 
Hence the critical levels also converge:
\begin{align}
    J(\psi_0)=\lim_{\eps_n\to 0^+} J_{\eps_n}(\psi_{\eps_n}) \geq c_1 >0. 
\end{align}
In particular,~$\psi_0\neq 0$. 

For the regularity of~$\psi_0$, note that~$\psi_0\in H^1\hookrightarrow L^6$ and thus 
\begin{align}
    \p_\psi F(x,\psi_0) \in L^{\frac{6}{\alpha_2-1}}
\end{align}
where~$\frac{6}{\alpha_2-1}>3$. 
Thus~$\psi_0\in W^{1,\frac{6}{\alpha_2 -1}} \hookrightarrow C^{\sigma}$ for some~$\sigma\in (0,1)$. 
By (F2),~$F\in C_{loc}^{1,\alpha}$ so~$\p_\psi F\in C_{loc}^\alpha$, thus the Schauder theory implies that~$\psi_0\in C_{loc}^{1,\sigma'}$ for some~$\sigma'\in (0,1)$.
Since~$\mathbb{T}^3$ is compact, by Schauder theory we conclude that~$\psi_0\in C^{1,\sigma''}$ for some~$\sigma''\leq \sigma'$.

\subsection{Proof of Proposition~\ref{prop:external}}

The argument is almost parallel to the previous one, and we only outline the modifications. 
Set
\begin{align}
    V(x)\coloneqq a+M(x), &  & 
    b\coloneqq \min_{\T^3}V(x), & & 
    W(x)\coloneqq V(x)-b.
\end{align}
By assumption,~$m_*<b\leq V(x)<m$, hence~$0\leq W(x)\leq m-b$. 

Consider the perturbed functional
\begin{align}
    J_{\eps,V}(\psi)
    = \int_{\mathbb{T}^3}\frac{1}{2}\Abracket{\psi,\pD\psi}-\frac{1}{2}V(x)|\psi|^2-F_\eps(x,\psi)\dv.
\end{align}
The verification of the Palais--Smale condition and deformation arguments are unchanged, since multiplication by~$V$ is bounded and the additional term~$|\pD|^{-1}(V\psi)$ is still compact on~$H^{1/2}$ as before.

The linking argument also carries over. Indeed, on the positive sphere,
\begin{align}
   J_{\eps,V}(\psi^+) 
   \geq \frac{1}{2}\left(1-\frac{\max V}{m}\right)\tnorm{\psi^+}^2_{H^{1/2}}
        -C\big(\tnorm{\psi^+}^{\alpha_1}_{H^{1/2}}
        +\tnorm{\psi^+}^{\alpha_2}_{H^{1/2}}\big) 
\end{align}
which is positive for sufficiently small radius since~$\max V<m$. 
On the top of the cylinder we apply Lemma~\ref{lemma:local control} with~$\mu=b$ to get~$R=R(b)$ and define~$\mathscr{C}_1(R(b))$ accordingly, where~$R(b)$ can be taken uniformly bounded. 
Since \(V\geq b\), this gives~$J_{\eps,V}\leq0$. 
Moreover, using~$V\geq b$ and the~$L^2$-orthogonality of~$H^{1/2,-}$ and~$\R e$,
\begin{align}
    \Lambda_{1,V}(\eps) \leq \sup_{\mathscr C_1(R(b))}J_{\eps,V} \leq \frac{m-b}{2m}R(b)^2.
\end{align}
Thus the corresponding perturbed critical points \(\psi_\eps\) satisfy the same uniform positive lower bound and an upper energy bound \(c_2\) which tends to zero as \(b\nearrow m\).

It remains only to check the uniform~$L^3$ estimate. If it failed, let
\begin{align}
    \lambda_n=\|\psi_{\eps_n}\|_{L^3}\to\infty, & & 
    \varphi_n=\frac{\psi_{\eps_n}}{\lambda_n}.
\end{align}
Then~$\|\varphi_n\|_{L^3}=1$, and the normalized equation becomes
\begin{align}
    (\pD-b)\varphi_n 
    = W\varphi_n +\frac{\p_\psi F(x,\psi_{\eps_n})}{\lambda_n} +\alpha_2\eps_n|\psi_{\eps_n}|^{\alpha_2-2}\varphi_n.
\end{align}
Repeating the argument of the previous section and using the projection estimate for~$\pD-b$, we obtain
\begin{align}
    \kappa_2 \leq C\parenthesis{ \|W\varphi_n\|_{L^{3/2}} +\delta +C_\delta c_2^{1/\beta}+ \frac{\eps_n^{1/\alpha_2}}{\lambda_n} }.
\end{align}
Noting that
\begin{align}
    \|W\varphi_n\|_{L^{3/2}} \leq C\|W\|_{L^\infty}\|\varphi_n\|_{L^3} \leq C(m-b),
\end{align}
we arrive at
\begin{align}
    \kappa_2 
    \leq C\parenthesis{ m-b+\delta+C_\delta c_2^{1/\beta} +\frac{\eps_n^{1/\alpha_2}}{\lambda_n} }.
\end{align}
Choosing first~$\delta>0$ small, then~$m_*$ sufficiently close to~$m$, and finally~$n$ sufficiently large, gives a contradiction. 
Consequently,~$(\psi_\eps)$ is uniformly bounded in~$L^3$, hence also in~$H^1$ by the same bootstrap argument as before.

We may therefore let~$\eps\to0$. 
The compactness and regularity arguments used in the proof of the main theorem yield a nontrivial~$C^1$ solution of~\eqref{eq:NLD with external field}.

\

\textbf{Conflict of Interest} The authors have no conflicts to disclose. 

\

\textbf{Data Availability Statement} Data sharing is not applicable to this article as no datasets were generated or analyzed during the current study.

\bibliographystyle{plain}
\bibliography{nonlinearDiracEqn}

\end{document}